\title{An alternate description of equivariant connections}
\author{Corbett Redden}
\address{Department of Mathematics, LIU Post, Long Island University, 720 Northern Blvd, Brookville, NY 11548, USA}
\email{corbett.redden@liu.edu}
\date{July 17, 2017}
\subjclass[2010]{55R91 (18F15 53C05 53C08 55R65)}
\numberwithin{equation}{section}
\theoremstyle{plain}
\newtheorem{thm}[equation]{Theorem}
\newtheorem{prop}[equation]{Proposition}
\newtheorem{lemma}[equation]{Lemma}
\theoremstyle{definition}
\newtheorem{rem}[equation]{Remark}
\DeclareFontFamily{OT1}{pzc}{}
\DeclareFontShape{OT1}{pzc}{m}{it}%
              {<-> s * [1.15] pzcmi7t}{}
\DeclareMathAlphabet{\mathpzc}{OT1}{pzc}{m}{it}
\newcommand{\R}{\mathbb{R}}
\newcommand{\fg}{\mathfrak{g}}
\newcommand{\iso}{\cong}
\renewcommand{\=}{:=}
\newcommand{\wt}{\widetilde}
\newcommand{\fk}{\mathfrak{k}}
\newcommand{\Id}{\operatorname{id}}
\newcommand{\cC}{\mathcal{C}}
\newcommand{\bas}{\operatorname{bas}}
\newcommand\Ad{\operatorname{Ad}}
\newcommand\op{\mathrm{op}}
\renewcommand{\o}{\circ}
\renewcommand{\d}{\partial}
\renewcommand\O{\Omega}
\newcommand\Man{\cat[Man]}
\newcommand\Set{\cat[Set]}
\newcommand\Pre{\cat[Pre]}
\newcommand\Gpd{{\cat[Gpd]}}
\newcommand{\B}{\mathpzc{B}}
\newcommand{\E}{\mathpzc{E}}
\newcommand{\Bn}{\mathpzc{B_{\scriptscriptstyle{\nabla}}\mkern-2mu}}
\newcommand{\En}{\mathpzc{E_{\scriptscriptstyle{\nabla}}\mkern-2mu}}
\newcommand\GMan{\text{-}\Man}
\newcommand{\cQ}{\mathcal{Q}}
\newcommand{\cQn}{\mathcal{Q}_{\mkern-1mu\scriptscriptstyle{\nabla}}\mkern-1mu}
\newcommand{\Tn}{T^{\scriptscriptstyle{\nabla}}}
\newcommand\cM{\mathpzc{M}}
\newcommand\Bun[1][\T]{\cat[Bun]_{#1}}
\newcommand\Bunc[1][\T]{\cat[Bun]_{#1,\nabla}}
\newcommand\GBun[1][\T]{\text{-}\cat[Bun]_{#1}}
\newcommand\GBunc[1][\T]{\text{-}\cat[Bun]_{#1,\nabla}}
\renewcommand{\sslash}{/\!\!/}
\newcommand{\nslash}{ /\!\!/\mkern-4mu_{\scriptscriptstyle{\nabla}}}
\newcommand{\mycdot}{\mkern-2mu \cdot \mkern-2mu}
\begin{document}


\begin{abstract}In this note, we consider a Lie group $G$ acting on a manifold $M$.  We prove that the category of principal bundles with connection on the differential quotient stack is equivalent to the category of $G$-equivariant principal bundles on $M$ with $G$-invariant connection.
\end{abstract}

\maketitle
\thispagestyle{empty} 

\section{Introduction}\label{Sec:Introduction}

Let $G, K$ be arbitrary finite-dimensional Lie groups, and let $M$ be a smooth finite-dimensional manifold with a $G$-action.  Let $\Bunc[G](X)$ denote the groupoid of principal $G$-bundles with connection on a manifold $X$.  This paper's result can be phrased as follows.  Consider all possible constructions $\cQn$ of the following form: to any $(P,\Theta) \in \Bunc[G](X)$, together with a $G$-equivariant map $P \xrightarrow{F} M$ is associated a $K$-bundle with connection $\cQn(P,\Theta, F) \in \Bunc[K](X)$; and, as indicated in the following diagram, to any map $P'\xrightarrow{\varphi}P$ of principal $G$-bundles is associated a morphism $\cQn(\varphi)$ of $K$-bundles with connection.
\[ \begin{tikzcd}[column sep=small]
(P', \varphi^*\Theta) \ar[r, "\varphi"] \ar[d] & (P,\Theta) \ar[r,"F"] \ar[d] & M \\
X' \ar[r, "\bar{\varphi}"] & X
\end{tikzcd} \quad \mapsto \quad
\begin{tikzcd}[column sep=tiny]\cQn(P', \varphi^*\Theta, F\o \varphi) \ar[rr, "\cQn(\varphi)"] \ar[dr]&&
 \bar{\varphi}^* \cQn(P,\Theta,F) \ar[dl] \\ & X'
\end{tikzcd}
 \]
The collection of such constructions $\cQn$ naturally forms a category that we denote ${\Bunc[K](\En G \times_G M)}$.  It is the groupoid of $K$-bundles with connection on the differential Borel quotient stack $\En G \times_G M$.  Theorem \ref{Thm:EquivBun} states that there is a natural equivalence of categories
\[ \Bunc[K](\En G \times_G M) \iso G\GBunc[K](M),\]
where $G\GBunc[K](M)$ is the category of $G$-equivariant principal $K$-bundles on $M$ with $G$-invariant connection.  

A functor $G\GBunc[K](M) \to \Bunc[K](\En G \times_G M)$ was previously given in both \cite{Redden16a} and \cite{1606.01129}.  In this paper, we show that the functor is an equivalence.  To do so, we construct an explicit inverse functor in the following way.  Evaluate a given $\cQn$ on the trivial $G$-bundle $P=M\times G$, with trivial connection and map $F(m,g) = {g^{-1}  m}$.  The resulting $K$-bundle with connection naturally has the structure of a $G$-equivariant bundle; the $G$-action is induced by left multiplication on $G$, viewed as a gauge transformation of $M\times G \to M$, together with the functoriality of $\cQn$.

Here is a brief explanation of the symbols we use.  We consider the stack $\En G \times_G M \in \Pre_{\Gpd}$ as a presheaf of groupoids on the site of manifolds.  Likewise, $\Bn K \in \Pre_{\Gpd}$ is the stack of principal $K$-bundles with connection, and we define $\Bunc[K](\En G\times_G M) \= \Pre_{\Gpd}(\En G \times_G M, \Bn K)$ as morphisms in the category of prestacks.  The analogous structures defined without connections are denoted by omitting the symbol $\nabla$.  Just as the Lie groupoid $M \sslash G = (G\times M \rightrightarrows M)$ stackifies to $\E G \times_G M$, there is an analogous $M \nslash G \in \Pre_{\Gpd}$ that stackifies to $\En G \times_G M$ and induces a natural equivalence of groupoids $\Pre_{\Gpd}(\En G\times_G M, \Bn K) \iso \Pre_{\Gpd}(M \nslash G, \Bn K)$.  In the remainder of this introduction, we will favor writing $M\nslash G$ over $\En G \times_G M$ simply because it uses less space.

The notion that $G$-equivariant $K$-bundles on $M$ correspond to $K$-bundles on $M\sslash G$ is well-established.  Previous related work includes but is certainly not limited to \cite{MR2977576, MR2817778, MR2147734, MR2270285, MR2506770, 1510.06392, 1310.7930}.  However, one must be careful when adding connections to the picture.  We want to emphasize the point that, when considering $K$-connections, one should also include $G$-connections in the quotient stack.  To see this explicitly, we express Theorem \ref{Thm:EquivBun} as the following diagram.  
\begin{equation}\label{Eq:ConnectionDiag} \begin{tikzcd}
M \arrow[rr, "{(Q,\Theta)}"] \arrow[d]&&[50]\Bn K \arrow[d] \\
M \nslash G \arrow[r] \arrow[urr, dotted, "\Theta \ G\text{-invt}"]& M \sslash G \arrow[ur,dotted, "\Theta \ G\text{-bas}" '] \arrow[r,dotted, "G\text{-struct on }Q" '] & \B K 
\end{tikzcd} \end{equation}
Given $(Q,\Theta) \in \Bunc[K](M)$, a $G$-structure on $Q$ is equivalent to the map $M\sslash G \to \B K$.  The map $M\to \Bn K$ factors through $M\nslash G \to \Bn K$ if and only if $\Theta$ is $G$-invariant, and it factors through $M\sslash G \to \Bn K$ if and only if $\Theta$ is $G$-basic.

The importance of using $M\nslash G$, as opposed to $M\sslash G$, becomes clear when considering geometric objects such as characteristic forms.  A bundle $(Q,\Theta) \in G\GBunc[K](M)$ induces a map $M\nslash G \to \Bn K$.  It was shown in \cite{Redden16a, 1606.01129} that, under the natural isomorphism $\O(M\nslash G) \iso \O_G(M)$ from \cite{MR3049871}, the induced map
\[ (S^n\fk^*)^K \iso \O^{2n}(\Bn K) \xrightarrow{(Q,\Theta)_G} \O^{2n}(M\nslash G) \iso \O^{2n}_G(M) \]
recovers the classical equivariant Chern--Weil forms of Berline--Vergne \cite{MR705039}.  Specifically, one recovers the actual form in $\left(S\fg^* \otimes \O(M)\right)^G$, as opposed to merely the cohomology class in $H_G^*(M;\R)$.

The paper is structured in the following way.  Section \ref{Sec:Review} explains the various notations and conventions used in the paper.  Section \ref{Sec:EquivConns} gives a characterization of $G$-equivariant connections on $M$ in terms of bundle isomorphisms over $G\times M$.  While much of this is known in various forms, we could not find any sources dealing with the connections in the form we needed.  Section \ref{Sec:Equivalences} is the heart of the paper.  Through a series of lemmas, we construct the inverse functor $\Bunc[K](M\nslash G) \to G\GBunc[K](M)$ and prove it is an equivalence of categories.  We should note that Section \ref{Sec:Equivalences} takes place in the world of prestacks, or functors $\Man^\op \to \Gpd$.  Because all constructions are essentially geometric, we do not need to invert weak equivalences in any way, and we do not require any special techniques from stacks or simplicial sheaves.  

The author would like to thank Byungdo Park and an anonymous referee for many helpful comments and suggestions to improve the paper.
\section{Preliminaries}\label{Sec:Review}

We begin by explaining our various notations and conventions.  See \cite{Redden16a} for further details.  Let $\Man$ denote the category of finite-dimensional smooth manifolds and smooth maps.  For $G$ a Lie group, $G\GMan$ is the category of $G$-manifolds (manifolds equipped with a smooth $G$-action) and equivariant maps.

For $M\in \Man$, we use $\Bunc[K](M)$ to denote the groupoid (category in which every morphism is invertible) of principal $K$-bundles with connection on $M$; morphisms are bundle isomorphisms that preserve the connection.  For $M \in G\GMan$, let $G\GBunc[K](M)$ denote the groupoid of $G$-equivariant principal $K$-bundles on $M$ with $G${\it -invariant connection}.  Explicitly, an object of $G\GBunc[K](M)$ is some $(Q,\Theta) \in \Bunc[K](M)$, plus the structure of $Q \in G\GMan$ such that: the actions of $G$ and $K$ on $Q$ commute, $Q \overset{\pi}\to M$ is $G$-equivariant, and $g^*\Theta =\Theta \in \O^1(Q;\fk)$ for all $g\in G$.  For $(Q_i, \Theta_i) \in G\GBunc[K](M)$, a morphism $(Q_1,\Theta_1) \overset{\phi}\to (Q_2,\Theta_2)$ is a $G$-equivariant map $Q_1 \overset{\phi}\to Q_2$, covering the identity map $\Id_M$,  that is an isomorphism of $K$-bundles with connection.  

A $G$-action on a manifold $Q$ induces a Lie algebra homomorphism $\fg \xrightarrow{\rho} \Gamma(TQ)$, defined for $q \in Q$ and $A \in TG_e \iso \fg$ by
\begin{equation}\label{Eq:VF}  \rho_{q}(A) = \begin{cases} \frac{d}{dt}\big|_{_{t=0}} \, e^{-tA}  \,q & \text{(left action)} \\
 \frac{d}{dt}\big|_{_{t=0}} \, q \, e^{tA} & \text{(right action)}  \\
\end{cases} \, \in TQ_q. \end{equation}
For $(Q,\Theta) \in G\GBunc[K](M)$ we say the connection is $G${\it -basic} if it is $G$-invariant and satisfies the additional property $\iota_{\rho(A)} \Theta = 0$ for all $A \in \fg$.  These bundles with connection are the objects of $G_{\bas} \GBunc[K](M)$, which is a full subgroupoid of $G\GBunc[K](M)$.  

The groupoids $\Bun[K](M)$ and $G\GBun[K](M)$ are defined analogously, using principal $K$-bundles with no specified connection.

\begin{rem}Our convention is to have the structure group of a principal bundle (usually $K$) act by right multiplication, while the additional symmetry group (usually $G$) acts by left multiplication.  When necessary, we use inverses to switch between left and right actions, which does not change the induced map $\rho$. \end{rem}

A presheaf of groupoids on the site of manifolds $\cM \in \Pre_\Gpd$, commonly known as a prestack, is a (pseudo-)functor $\cM\colon \Man^\op \to \Gpd$.  Associated to any test manifold $X$ is a groupoid $\cM(X)$, and a smooth map $X_1 \xrightarrow{f} X_2$ induces a functor $\cM(X_2) \xrightarrow{f^*} \cM(X_1)$.  There are associative natural transformations $f^* g^* \iso (g\o f)^*$ to deal with composition of functions; we show them when necessary but do not give them special names.  There is an important full subcategory of stacks within prestacks, but we do not require any of the tools associated to stacks.  See Remark \ref{Rem:Stacks} for a further comment on this.

The collection of morphisms between prestacks naturally forms a groupoid, making $\Pre_\Gpd$ into a 2-category.  An object $\cQ$ in the groupoid $\Pre_\Gpd(\cM_1, \cM_2)$ is a 1-morphism $\cM_1\xrightarrow{\cQ} \cM_2$, which is a collection of functors $\cM_1(X) \xrightarrow{\cQ_X} \cM_2(X)$ for all $X$, together with natural transformations $\cQ_f \colon \cQ_X \o f^* \Rightarrow f^* \o \cQ_Y$ for all smooth $f \colon X \to Y$.   A morphism $\cQ \xrightarrow{\varphi} \cQ'$ in $\Pre_\Gpd(\cM_1, \cM_2)$, which is a 2-morphism 
$\begin{tikzcd}[column sep=huge]
{\cM_1}  & {\cM_2}
 \ar[from=1-1, to=1-2, bend left=20, "" '{name=U}, "\cQ"]   \ar[from=1-1, to=1-2, bend right=20, "" {name=D}, "\cQ' " '] 
\ar[Rightarrow, from=U, to=D, "\varphi" '] 
\end{tikzcd}$  
in $\Pre_\Gpd$, is a collection of natural transformations $\varphi_X \colon \cQ_X \Rightarrow \cQ'_X$ compatible with the pullback structures.  Any $\varphi$ must be invertible since $\cM_2(X)$ is always a groupoid.  We later omit the subscripts on $\cQ$ and $\varphi$, as they are easily determined from context.

We briefly define the examples relevant to this paper.  We refer the reader to \cite{MR3049871, Redden16a} for further details and explanations of these particular objects.  Any manifold $M \in \Man$  defines a stack by $M(X) \= C^\infty(X,M) \in \Set \subset \Gpd$, regarded as a category with no nontrivial morphisms.  The Yoneda Lemma states that this is a faithful embedding, and there is a natural equivalence of categories $\cM(X) \iso \Pre_\Gpd(X, \cM)$ for any $\cM \in \Pre_\Gpd$.  
Define $\Bn K$ as the stack that assigns to every test manifold $X$ the groupoid of principal $K$-bundles with connection, $\Bn K (X) \= \Bunc[K](X)$.  We use the natural Yoneda equivalence $\Bunc[K](X) \iso \Pre_\Gpd(X, \Bn K)$ to make the following definition/abbreviation for $\cM \in \Pre_\Gpd$,
\begin{equation} \Bunc[K](\cM) \= \Pre_\Gpd(\cM, \Bn K). \end{equation}

For $M \in G\GMan$, we have the {\it differential Borel quotient stack} $\En G \times_G M$.  An object $(P,\Theta,F) \in (\En G \times_G M)(X)$ is a principal $G$-bundle with connection $(P,\Theta)\to X$, together with a $G$-equivariant map $P\xrightarrow{F} M$.  Morphisms are given by connection-preserving bundle morphisms that are compatible with the maps to $M$.  We draw objects and morphisms in the following two equivalent ways, where $\varphi^*\Theta_2=\Theta_1$.
\[ \begin{tikzcd}[column sep=huge]
{X}  & {\En G \times_G M}
 \ar[from=1-1, to=1-2, bend left=40, "" '{name=U}, "{(P_1, \Theta_1, F_1)}"]   \ar[from=1-1, to=1-2, bend right=40, "" {name=D}, "{(P_2, \Theta_2, F_2)}" '] 
\ar[Rightarrow, from=U, to=D, "\varphi" '] 
\end{tikzcd}
 \quad \leftrightsquigarrow \quad 
\begin{tikzcd}[row sep=small] 
 & (P_1, \Theta_1) \ar[dl] \ar[dr, "F_1"] \ar[dd, "\varphi", "\iso" ', swap] \\
 X && M \\
 &(P_2, \Theta_2) \ar[ul] \ar[ur, "F_2" '] 
\end{tikzcd} \]
When $P=X\times G$ is the trivial bundle, the connection $\Theta$ and equivariant map $F$ are uniquely determined by their restriction to $X$ under the canonical frame.   This gives a map $M\nslash G \to \En G \times_G M$, as explained in the following paragraph.

In general, given a (left) $G$-action on a set $S$, let $S \sslash G = (G \times S \rightrightarrows S)$ denote the associated action groupoid.  For $M\in G\GMan$, the smooth structures make $M \sslash G$ into a Lie groupoid, which in turn defines the prestack 
\[ M \sslash G \= \Big( G\times M \overset{d_0}{\underset{d_1}{\rightrightarrows}} M \Big)  \in \Pre_\Gpd\]
via the Yoneda embedding.  Explicitly, the groupoid $(M\sslash G)(X)$ has $C^\infty(X,M)$ as its set of objects and $C^\infty(X,G\times M)$ as its set of morphisms.  Let $\O^1(\fg) \in \Pre_\Gpd$ assign to any test manifold the set of $\fg$-valued 1-forms $\O^1(\fg)(X) \= \O^1(X;\fg)$.  The action of gauge transformations on connections defined in a local frame induces a natural (right) action of the sheaf $G$ on $\O^1(\fg)$. This is given by the familiar formula
\[ \alpha \mycdot g \= \Ad_{g^{-1}} \alpha + g^*\theta_\fg,\]
where $\alpha \in \O^1(X;\fg)$, $g \in C^\infty(X,G)$, and $\theta_\fg \in \O^1(G;\fg)$ is the Maurer--Cartan 1-form (usually written $g^{-1}dg$ for matrix groups).   Define the {\it differential action groupoid} by 
\begin{equation}\label{Eq:DiffActionGpd}
M \nslash G \= (\O^1(\fg) \times M) \sslash G =  \Big(G \times \O^1(\fg) \times M
 \overset{\d_0}{\underset{\d_1}{\rightrightarrows}} \O^1(\fg) \times M \Big) \in \Pre_\Gpd,
\end{equation}
with $\d_0(g, \alpha, f) = (\alpha, f)$ and $\d_1(g, \alpha, f) = (\alpha \mycdot g^{-1}, gf)$.  The trivial bundle  defines a natural map $M \nslash G \to \En G \times_G M$, which we conveniently write
\[ \begin{tikzcd}
{X}  & [20]M \nslash G   \ar[r] &[-10] \En G \times_G M 
 \ar[from=1-1, to=1-2, bend left=40,"" '{name=U}, "{(\alpha,\, f)}"]   
 \ar[from=1-1, to=1-2, bend right=40, "" {name=D}, "{(\alpha \cdot g^{-1}, \,g f)}" '] 
\ar[Rightarrow, from=U, to=D, "{(g, \alpha, f)}" description] 
\end{tikzcd}  \rightsquigarrow 
\begin{tikzcd}[row sep=small, column sep=small] 
 & (X\times G, \Theta_{\alpha}) \ar[dl] \ar[dr, "F_f"] \ar[dd, "\varphi_g", "\iso" ', swap] \\
 X && M, \\
 &(X \times G, \Theta_{\alpha\cdot g^{-1}}) \ar[ul] \ar[ur, "F_{gf}" '] 
\end{tikzcd} \]
where $F_f(x, h) = h^{-1} f(x)$, $\varphi_g(x,h) = (x, g(x)h)$, and $(\Id_X \times \, e)^*\Theta_\alpha = \alpha$.

When $X$ is contractible, then $(M\nslash G) (X) \to (\En G \times_G M)(X)$ is an equivalence of categories, and the choice of a global section provides an explicit inverse.

\begin{rem}\label{Rem:Stacks}The map $M\nslash G \to \En G \times_G M$ is actually the stackification map, giving an equivalence of the induced stack $[M\nslash G] \iso \En G \times_G M$.  Since $\Bn K$ is a stack, this implies $\Tn_2$ from Section \ref{Sec:Equivalences} is an equivalence
\begin{equation}\label{Eq:StackificationBundles} \Pre_{\Gpd} \left( \En G \times_G M, \Bn K\right) \overset{\iso}\longrightarrow \Pre_\Gpd \left( M \nslash G, \Bn K \right),\end{equation}
making Lemma \ref{Lem:T2Faithful} unnecessary (see \cite{MR3019405} for many of these details).  However, we include the lemma and work only with prestacks for two reasons.  First, distinguishing between structures on $M\nslash G$ and $\En G \times_G M$ avoids ambiguities in notation.  Second, we want to emphasize that only functorial geometric constructions are needed, as opposed to more sophisticated stack machinery.
\end{rem}

\section{Equivariant Connections}\label{Sec:EquivConns}
We proceed to characterize equivariant bundles with connection in terms of bundles on the Lie groupoid $G\times M \rightrightarrows M$.  Proposition \ref{Prop:EquivStruct} is certainly well-known, and variations on   
Proposition \ref{Prop:EquivConns}, with $K$ abelian or involving pseudoconnections, appear in the literature.  Examples include \cite[Appendix]{Brylinski-Grb00}, \cite[\textsection 4]{MR2817778}, \cite[\textsection 5]{MR2147734}, \cite[p. 10]{MR2206877}, \cite{MR2270285}, \cite{MR2506770}.  However, we could not find a detailed statement of our Proposition \ref{Prop:EquivConns}, or a detailed proof Proposition \ref{Prop:EquivStruct} in the same form as what we use here.  Since the correspondence is essential to Theorem \ref{Thm:EquivBun}, we go ahead and include an explicit account. 

%

The Lie groupoid $G\times M \rightrightarrows M$ extends to a simplicial manifold $\{G^\bullet \times M\}$, and we describe the relevant face maps $d_i$ and identities through the following diagram.
\[ \begin{tikzcd} G\times G \times M \ar[r] \ar[r, shift left=2] \ar[r, shift right=2] & G \times M \ar[r, shift left] \ar[r, shift right] & M \\ [-15]
 &(g_2, m) \ar[r,mapsto,"d_0" description, near start]  &m \\
(g_1, g_2, m) \ar[ur, mapsto, "d_0" description, near start] \ar[r,mapsto, "d_1" description, pos=.4] \ar[dr,mapsto, "d_2" description, near start] & (g_1 g_2, m)  \ar[ur,mapsto,"d_0" description, near start, bend right=15]  &  g_2 m \\
& (g_1, g_2 m) \ar[ur,mapsto,"d_0" description, near start, bend right=10] \ar[r,mapsto,"d_1" description, pos=.4]& g_1 g_2 m
\ar[from=2-2, to=3-3, mapsto, crossing over,"d_1" description, near start, bend left=10] \ar[from=3-2, to=4-3, mapsto, crossing over, "d_1" description, near start, bend left=15]
\end{tikzcd} \]

\begin{prop}\label{Prop:EquivStruct}For $M \in G\GMan$, an equivariant bundle $Q \in G\GBun[K](M)$ is equivalent to: $Q \in \Bun[K](M)$, together with a bundle morphism ${\Psi\colon d_0^*Q \xrightarrow{\iso} d_1^*Q}$ satisfying the cocycle identity \eqref{Eq:CocycleIdentity}.
\end{prop}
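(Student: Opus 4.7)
The plan is to make the correspondence explicit on both sides and check the two constructions are mutually inverse. The key observation is that the pullback bundles over $G\times M$ have fibers $(d_0^*Q)_{(g,m)} = Q_m$ and $(d_1^*Q)_{(g,m)} = Q_{gm}$, so a $K$-equivariant bundle isomorphism $\Psi\colon d_0^*Q \xrightarrow{\iso} d_1^*Q$ covering $\Id_{G\times M}$ is precisely a smooth family of $K$-equivariant isomorphisms $\psi_{g,m}\colon Q_m \xrightarrow{\iso} Q_{gm}$ parametrized by $(g,m)\in G\times M$.

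Forward direction: given $Q \in G\GBun[K](M)$, I set $\psi_{g,m}(q) \= g\mycdot q$. Smoothness comes from smoothness of the $G$-action, $K$-equivariance from the fact that the $G$- and $K$-actions commute, and the image lands in $Q_{gm}$ because the projection $\pi\colon Q \to M$ is $G$-equivariant. Working out the six double face maps $d_i \o d_j$ on $G\times G \times M$ via the diagram displayed above the proposition, the cocycle identity \eqref{Eq:CocycleIdentity}, which takes the form $d_1^*\Psi = d_2^*\Psi \o d_0^*\Psi$, unwinds to $\psi_{g_1 g_2, m} = \psi_{g_1,\, g_2 m} \o \psi_{g_2, m}$, i.e.\ the associativity axiom $(g_1 g_2)\mycdot q = g_1 \mycdot (g_2 \mycdot q)$.

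Reverse direction: given $(Q,\Psi)$ with $Q\in\Bun[K](M)$, I define a map $G\times Q \to Q$ by $g \mycdot q \= \psi_{g,\pi(q)}(q)$. Smoothness and $K$-equivariance are immediate from the corresponding properties of $\Psi$, and $\pi$ is $G$-equivariant by construction. The cocycle identity yields associativity as above. Pulling the cocycle back to the locus $g_1 = g_2 = e$ gives $\psi_{e,m} = \psi_{e,m} \o \psi_{e,m}$, and since $\Psi$ is assumed to be an isomorphism this forces $\psi_{e,m} = \Id_{Q_m}$, so $e\mycdot q = q$. Hence $Q$ acquires the structure of a $G$-equivariant principal $K$-bundle over $M$.

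The two constructions are manifestly inverse to each other at the object level. The only real content is the translation between the simplicial cocycle identity and the associativity axiom, which after the fiber identifications above is a direct calculation; I do not expect any conceptual obstacle, only a small amount of bookkeeping to confirm that the various smoothness and equivariance conditions transport correctly through the correspondence.
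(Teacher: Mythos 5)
Your proposal is correct and takes essentially the same route as the paper's proof: the paper likewise identifies $\Psi$ with a map of $K$-bundles $\ell\colon G\times Q \to Q$ (your fiberwise family $\psi_{g,m}$, via $\Psi(g,m,q) = (g,m,\ell(g,q))$ inside $G\times M\times Q$), computes the three pullbacks $d_i^*\Psi$ over $G\times G\times M$, and shows the cocycle identity $d_1^*\Psi \iso d_2^*\Psi \o d_0^*\Psi$ is exactly the associativity law $\ell(g_1g_2,q) = \ell(g_1,\ell(g_2,q))$, i.e.\ your $\psi_{g_1g_2,m} = \psi_{g_1,\,g_2m}\o\psi_{g_2,m}$. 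Your one genuine addition is the explicit derivation of the unit axiom $\psi_{e,m} = \Id_{Q_m}$ from the cocycle at $g_1=g_2=e$ together with invertibility of $\Psi$ --- a point the paper's proof leaves implicit --- and that argument is valid as stated.
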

\begin{proof}

For $Q \in \Bun[K](M)$, a map of $K$-bundles $G \times Q \xrightarrow{\ell} Q$ is equivalent to a bundle isomorphism $d_0^* Q \xrightarrow{\Psi} d_1^* Q$, as indicated in the following diagram.
\begin{equation}\label{eq:PsiDefn} \begin{tikzcd}[column sep=tiny]
G \times Q  \ar[rrr, bend left=25, "\ell", dashed,swap] \ar[dr] \ar[r, phantom, "\iso"] & d_0^*Q \ar[r, dashed, "\Psi"] \ar[d] & d_1^* Q \ar[dl] \ar[r, "\wt{d}_1"]& [50] Q \ar[d] \\
&G\times M \ar[rr, "d_1"] && M 
\end{tikzcd} \end{equation}
By regarding $d_i^*Q \subset G \times M \times Q$ we have $\Psi(g, m, q) = (g, m, \ell(g,q))$;  under the natural identification  $d_j^* d_i^* Q \iso (d_i \o d_j)^*Q \subset G \times G \times M \times Q$ one sees that 
\[ \left(d_i^*\Psi\right) (g_1, g_2, m, q) = \begin{cases} (g_1, g_2, m, \ell(g_2, q)) & i = 0, \\ (g_1, g_2, m, \ell(g_1 g_2, q)) & i = 1, \\ (g_1, g_2, m, \ell(g_1, q)) &i=2. 
\end{cases} \]
Hence, $\ell$ satisfies the associativity condition $\ell(g_1g_2, q) = \ell(g_1, \ell(g_2,q))$ if and only if the diagram 
\begin{equation}\label{Eq:CocycleIdentity} \begin{tikzcd}[row sep=.8]
 &[-20] d_1^* d_0^* Q \ar[rr, "d_1^*\Psi"] &&d_1^* d_1^* Q \ar[dr,phantom,"\iso" rotate=-45] \\
 (d_0 \o d_0)^*Q \ar[ur,phantom,"\iso" rotate=45] \ar[dr,phantom,"\iso" rotate=-45] &[-35]&&& [-20] (d_1 \o d_1)^* Q  \\
& d_0^* d_0^* Q \ar[r, "d_0^*\Psi"] & d_0^* d_1^* Q \iso d_2^* d_0^* Q \ar[r, "d_2^* \Psi"] &d_2^* d_1^* Q  \ar[ur,phantom,"\iso" rotate=45]
\end{tikzcd} \end{equation}
commutes.  We may abbreviate this cocycle condition as $d_1^*\Psi \iso d_2^* \Psi \o d_0^*\Psi$.
\end{proof}

An important subtlety arises when introducing connections, as the isomorphism $\Psi$ does not usually preserve the induced connections; one must instead subtract a counter-term.  This term was explicitly used in \cite{Redden16a}, but it appeared implicitly in \cite{MR1850463} and is closely related to the equivariant curvature form \cite{MR705039}.  

Remember that for $Q \in \Bun[K](M)$, the space of connections is affine over $\O^1(M;\fk_{Q})$, where $\fk_Q = Q \times_{\Ad} \fk$ is the associated adjoint bundle.  Suppose that $\rho\colon \fg \to \Gamma(TQ)^K$ is a linear map (e.g. $\rho$ is induced by a $G$-action as defined in \eqref{Eq:VF}).  Explicitly, the requirement that $\rho$ maps to $K$-invariant vector fields  can be written
\[ \rho_{qk}(A) = \rho_q(A) \mycdot k  \in TQ_{qk} ,\]
where $v\mycdot k$ denotes the image of a tangent vector $v$ under the map between tangent spaces induced by right mutiplication of $k \in K$.  Given $\alpha \in \O^1(X;\fg)$, $f\colon X\to M$, and a connection $\Theta$ on $Q$, we define the form 
\[ \left(  \iota_{\rho(\alpha)} f^*\Theta \right) \, \in \, \O^1(X; \fk_{f^*Q}) = C^\infty\big(X, TX^* \otimes (f^*Q \times_{\Ad} \fk) \big) \] 
by the following equation.  Given $x \in X$, $v \in TX_x$, and $q \in (f^*Q)_x$ a point in the fiber of $x$, then 
\begin{equation}\label{Eq:Contract}
\big(\iota_{\rho(\alpha)} f^*\Theta\big)_{x}(v) := \Big[ q, \, \Theta\big( \rho_{_{f(q)}} (\alpha(v)) \big)  \Big] \in f^*Q \times_{\Ad} \fk.
\end{equation}
(Though it is a minor abuse of notation, we use the same symbol here for the base maps and pullback maps to minimize notational confusion.)  Subtracting this term from the pullback connection $f^*\Theta$ gives the modified connection $(1-\iota_{\rho(\alpha)})f^*\Theta$ on $f^*Q$.  The functoriality of this construction can be expressed as the equation 
\begin{equation}\label{Eq:ContractionPullback} \varphi^*(1-\iota_{\rho(\alpha)}) = (1-\iota_{\rho(\varphi^*\alpha)}) \varphi^*.
\end{equation}
More precisely, given $\alpha\in \O^1(X;\fg)$ and maps
\[ \begin{tikzcd}
\varphi^* f^*Q \ar[d] \ar[r, "\varphi"]& f^*Q \ar[d] \ar[r, "f"]& Q \ar[d] \\
Y \ar[r,"\varphi"]& X \ar[r, "f"]& M,
\end{tikzcd} \]
then $\varphi^*\left((1-\iota_{\rho(\alpha)})f^*\Theta \right) = (1-\iota_{\rho(\varphi^*\alpha)}) (f \o \varphi)^*\Theta \in \O^1(Y; \fk_{\varphi^*f^*Q})$.  

\begin{lemma}\label{Lem:Contract}The term $\iota_{\rho(\alpha)}$ is well-defined and satisfies \eqref{Eq:ContractionPullback}.
\end{lemma}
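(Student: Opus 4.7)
\textbf{Plan for Lemma \ref{Lem:Contract}.} The plan is to verify the two assertions separately: first, that \eqref{Eq:Contract} is independent of the representative $q$ in the fiber of $f^*Q\to X$ over $x$; and second, the naturality identity \eqref{Eq:ContractionPullback}. Both reduce to bookkeeping built from three standard facts: (a) the canonical bundle map $\wt{f}\colon f^*Q\to Q$ is $K$-equivariant and covers $f\colon X\to M$; (b) the connection form satisfies $R_k^*\Theta = \Ad_{k^{-1}}\Theta$; and (c) $\rho$ takes values in $K$-invariant vector fields, i.e.\ $\rho_{qk}(A) = \rho_q(A)\mycdot k$.

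For well-definedness, I will substitute $qk$ for $q$ with arbitrary $k\in K$ and show the resulting class in $f^*Q\times_{\Ad}\fk$ is unchanged. Applying (a) yields $\wt f(qk)=\wt f(q)\mycdot k$; invoking (c) then gives $\rho_{\wt f(q)\mycdot k}(\alpha(v)) = \rho_{\wt f(q)}(\alpha(v))\mycdot k$; and (b) contributes the compensating factor $\Ad_{k^{-1}}$ on the $\fk$-component. These three identifications conspire precisely so that $(qk,\Theta(\rho_{\wt f(qk)}(\alpha(v))))$ defines the same equivalence class as $(q,\Theta(\rho_{\wt f(q)}(\alpha(v))))$. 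This is the step where the three hypotheses on $\rho$ and $\Theta$ are genuinely used.

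For the naturality \eqref{Eq:ContractionPullback}, since $\varphi^*f^*\Theta = (f\o\varphi)^*\Theta$, it suffices to show $\varphi^*\iota_{\rho(\alpha)}f^*\Theta = \iota_{\rho(\varphi^*\alpha)}(f\o\varphi)^*\Theta$. Fix $y\in Y$, $w\in TY_y$, and $q\in(\varphi^*f^*Q)_y$, and let $\wt\varphi\colon \varphi^*f^*Q\to f^*Q$ denote the canonical bundle map. Under the standard isomorphism $\varphi^*\fk_{f^*Q}\iso \fk_{\varphi^*f^*Q}$ sending $[\wt\varphi(q),\xi]$ to $[q,\xi]$, the left-hand side evaluates to $\big[q,\Theta\big(\rho_{\wt f(\wt\varphi(q))}(\alpha(d\varphi(w)))\big)\big]$. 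The chain rule $(\varphi^*\alpha)(w)=\alpha(d\varphi(w))$ together with $\wt f\o\wt\varphi = \wt{f\o\varphi}$ then matches this to the right-hand side. The only real obstacle throughout is distinguishing the various pullback bundles, adjoint bundles, and bundle maps; no genuine analytic or categorical subtlety arises, since $\iota_{\rho(\alpha)}$ depends only on pointwise $C^\infty$-linear data.
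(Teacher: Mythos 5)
Your proposal is correct and follows essentially the same route as the paper's proof: well-definedness via $K$-equivariance of the pullback bundle map, $K$-invariance of $\rho$, and $R_k^*\Theta = \Ad_{k^{-1}}\Theta$ combining to give $\big[qk,\,\Ad_{k^{-1}}\Theta(\rho(\alpha(v)))\big] = \big[q,\,\Theta(\rho(\alpha(v)))\big]$ in $f^*Q\times_{\Ad}\fk$; and \eqref{Eq:ContractionPullback} via a pointwise evaluation using the canonical identification of adjoint bundles together with $(\varphi^*\alpha)(w)=\alpha(\varphi_* w)$. Your only deviation is notational --- you keep the bundle maps $\wt f$, $\wt\varphi$ distinct from the base maps, where the paper deliberately overloads the symbols --- which is harmless and arguably clearer.
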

\begin{proof}
Both statements follow directly from the definitions.  We first check that $\iota_{\rho(\alpha)}$, as given  in \eqref{Eq:Contract}, is well-defined.  For $x \in X$, let $q$ and $qk$ be two points in the fiber $(f^*Q)_x$.  Then, for $v\in TX_x$, 
\[
 \Theta\Big( \rho_{_{f(qk)}} \big(\alpha(v) \big) \Big)  =   \Theta\Big( \rho_{_{f(q)k}} \big(\alpha(v) \big) \Big)  
 =  \Theta\Big( \rho_{_{f(q)}} \big(\alpha(v)  \big) \mycdot k \Big)  
 = \Ad_{k^{-1}}  \Theta\Big( \rho_{_{f(q)}} \big(\alpha(v) \big) \Big) .
 \]
Hence,
 \[ \Big[qk, \,  \Theta\big( \rho_{_{f(qk)}} (\alpha(v) ) \big) \Big] = \Big[qk, \,  \Ad_{k^{-1}}  \Theta\big( \rho_{_{f(q)}} (\alpha(v) ) \big)  \Big] = \Big[ q, \, \Theta\big( \rho_{_{f(q)}} (\alpha(v)) \big)  \Big]\]
 is a well-defined element of  $f^*Q \times_{\Ad}\fk$.

To check \eqref{Eq:ContractionPullback}, let $v \in TY_{y}$ and $q \in (\varphi^* f^*Q)_y$.  The form $\varphi^*(\iota_{\rho(\alpha)}f^*\Theta)$, when evaluated on $v$, is defined as the unique element in $(\varphi^* f^*Q \times_{\Ad} \fk)_{y}$ that maps to $(\iota_{\rho(\alpha)}f^*\Theta)(\varphi_*v) \in (f^*Q \times_{\Ad} \fk)_{\varphi(y)}$.  We have
\[ \left( \iota_{\rho(\alpha)} f^*\Theta \right)_{\varphi(y)}(\varphi_* v)  =  \left[\varphi(q), \Theta\left(  \rho_{_{f(\varphi(q))}} (\alpha(\varphi_*(v)) \right) \right] = [\varphi(q), \Theta \left( \rho_{_{f(\varphi(q))}} (\varphi^*\alpha(v))   \right)].  \]
Hence,
\[
\left( \varphi^*(\iota_{\rho(\alpha)} f^*\Theta) \right)_y(v)
 = [q,\Theta \left( \rho_{_{f(\varphi(q))}} (\varphi^*\alpha(v))   \right) ] 
=  \left( \iota_{\rho(\varphi^* \alpha)} ( \varphi^* f^* \Theta ) \right)_y(v).
\]
\end{proof}

Of particular importance is when $\alpha$ is the Maurer--Cartan 1-form $\theta_\fg \in \O^1(G;\fg)$.  We also use the same symbol $\theta_\fg \in \O^1(G \times M;\fg)$ to denote its pullback via the projection map $G\times M \to G$.  Assume now that $Q \in G\GBun[K](M)$, and that $\Psi\colon d_0^*Q \to d_1^*Q$ and  $\rho \colon \fg \to \Gamma(TQ)^K$ are defined by the $G$-action.  Then, we have the following characterization of $G$-invariant and $G$-basic connections on $Q$.

\begin{prop}\label{Prop:EquivConns}For $Q \in G\GBun[K](M)$, a connection $\Theta$ is $G$-invariant if and only if ${\Psi^*d_1^*\Theta = (1-\iota_{\rho(\theta_\fg)})d_0^*\Theta}$.  A connection is $G$-basic if and only if it is $G$-invariant and $\Psi^*d_1^*\Theta = d_0^*\Theta$.
\end{prop}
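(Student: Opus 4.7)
The plan is to verify the identity $\Psi^*d_1^*\Theta = (1-\iota_{\rho(\theta_\fg)})d_0^*\Theta$ pointwise on the total space of $d_0^*Q$, by first interpreting its right-hand side as a $\fk$-valued $1$-form on that total space. I would identify $d_0^*Q \cong G \times Q$ with $(g,q)$ corresponding to the point of $d_0^*Q$ lying over $(g,\pi(q))\in G\times M$, and analogously $d_1^*Q \cong G \times Q$ with $(g,q')$ corresponding to the point over $(g, g^{-1}\pi(q'))$. Under these trivializations both bundle maps $\tilde d_i$ become projection to the second factor, and from the proof of Proposition \ref{Prop:EquivStruct} the map $\Psi$ sends $(g,q)$ to $(g, gq)$; in particular $\tilde d_1 \circ \Psi$ is the action map $\mu(g,q) = gq$.

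The core calculation is then to differentiate $\mu$. For $(A,w) \in T_gG \oplus T_qQ$, writing $A = (L_g)_* \theta_\fg(A)$ and using the sign convention $\rho_q(B) = \frac{d}{dt}\bigl|_{t=0} e^{-tB}q$ from \eqref{Eq:VF} gives
$$\mu_*(A,w) = (L_g)_*\bigl(w - \rho_q(\theta_\fg(A))\bigr),$$
and hence
$$\Psi^*d_1^*\Theta_{(g,q)}(A,w) = (L_g^*\Theta)_q\bigl(w - \rho_q(\theta_\fg(A))\bigr).$$
On the other hand, using the tautological fiber trivialization $[q,X] \leftrightarrow X$ at each point of $G \times Q$, the associated-bundle-valued form \eqref{Eq:Contract} lifts to the $\fk$-valued form $(g,q)\mapsto \Theta_q(\rho_q(\theta_\fg(A)))$ on the total space, so
$$\bigl((1-\iota_{\rho(\theta_\fg)})d_0^*\Theta\bigr)_{(g,q)}(A,w) = \Theta_q\bigl(w - \rho_q(\theta_\fg(A))\bigr).$$

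The two sides therefore agree if and only if $(L_g^*\Theta)_q(v) = \Theta_q(v)$ for every $g$, $q$, and every $v$ of the form $w - \rho_q(\theta_\fg(A))$; since $A$ and $w$ vary independently, $v$ ranges over all of $T_qQ$, so the condition is precisely $L_g^*\Theta = \Theta$ for every $g$, i.e.\ $G$-invariance. For the $G$-basic half, pointwise surjectivity of $\theta_\fg$ onto $\fg$ shows that $\iota_{\rho(\theta_\fg)}d_0^*\Theta = 0$ is equivalent to $\iota_{\rho(B)}\Theta = 0$ for all $B \in \fg$; combined with $G$-invariance this yields $\Psi^*d_1^*\Theta = d_0^*\Theta$, completing the basic case.

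The main obstacle is bookkeeping rather than substance: picking compatible trivializations of the two pullback bundles, translating between the associated-bundle-valued form \eqref{Eq:Contract} and its $\fk$-valued representative on the total space of $d_0^*Q$, and ensuring that the minus sign in the counter-term $\rho_q(\theta_\fg(A))$ matches the convention \eqref{Eq:VF}. Once these identifications are locked in, the remaining verification reduces to the single chain-rule computation of $\mu_*$ above.
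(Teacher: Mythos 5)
Your proposal is correct and follows essentially the same route as the paper's proof: both identify $\Psi^*d_1^*\Theta$ with the pullback of $\Theta$ along the action map $\ell(g,q)=gq$, compute its differential via the chain rule with the sign from \eqref{Eq:VF}, and compare against the lift of the counter-term $\iota_{\rho(\theta_\fg)}d_0^*\Theta$ to the total space, concluding both halves by letting the tangent arguments vary independently. The only cosmetic difference is that you parameterize $T_g G$ via $\theta_\fg$ where the paper uses left-translated vectors $g\mycdot A$ with $\theta_\fg(g \mycdot A)=A$, which is the same computation.
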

\begin{proof}Given a connection $\Theta \in \O^1(Q;\fk)$, we use the natural isomorphisms in \eqref{eq:PsiDefn} to calculate $\Psi^*d_1^*\Theta$ as $\ell^*\Theta \in \O^1(G\times Q;\fk)$.  Let $A \in TG_e$ generate the vector field $\xi \in \fg$, with $\xi_g = g \mycdot A \in TG_g$, and let $v \in TQ_q$.  Then,
\[ \begin{tikzcd}[column sep=-7, row sep=-1]
TG_g &\oplus &TQ_q \ar[r, "\ell_*"]&[25]TQ_{gq} \\
&&v \ar[r, mapsto] &g\mycdot v \\
g\mycdot A \ar[rrr,mapsto] &&& - g \mycdot \rho(A) ,
\end{tikzcd}\]
with the negative sign due to the convention explained in \eqref{Eq:VF}.  Hence, 
\begin{align*}
(\ell^* \Theta)_{(g, q)}  (g\mycdot A + v) &= \Theta_{g q}  \left( g\mycdot (v-\rho(A)) \right) 
	= (g^*\Theta)_{q} \left( v -\rho(A) \right). 
\end{align*}
On the other hand, $d_0\colon G\times M \to M$ is the projection map with $d_{0*}(g\mycdot A + v) = v$, and $\theta_\fg(g\mycdot A + v) = A$.  Hence, $(1-\iota_{\rho(\theta_\fg)})d_0^*\Theta \in \O^1(G\times Q,\fk)$ is given by 
\begin{align*}
\big((1-\iota_{\rho(\theta_\fg)})d_0^*\Theta \big)_{(g,q)} (g \mycdot A + v) &=  \Theta_{q}\Big( d_{0*} (g\mycdot A + v) - \rho_q(\theta_\fg(g \mycdot A+v)) \Big) \\
&= \Theta_{q}\left(v-\rho(A)\right).
\end{align*}
Hence, $\ell^*\Theta = (1-\iota_{\rho(\theta_{\fg})} ) d_0^* \Theta$ if and only if $g^*\Theta = \Theta$ for all $g\in G$.  Furthermore, $\ell^*\Theta = d_0^*\Theta$ if and only if $\Theta$ is $G$-invariant and $\Theta(\rho(A)) = 0$ for all $A\in TG_e$.
\end{proof}

\section{Connections on the differential Borel quotient stack}\label{Sec:Equivalences}

We proceed to outline our main result, Theorem \ref{Thm:EquivBun}.  The actual proof will follow from a series of smaller lemmas.  Consider the following diagram of functors.
\begin{equation}\label{Eq:ProofDiagram} \begin{tikzcd}[column sep=tiny]
&G\GBunc[K](M) \ar[dr, "\Tn_1"] \\
\Bunc[K](M \nslash G) \ar[ur, "\Tn_3"]&&\Bunc[K](\En G \times_G M) \ar[ll, "\Tn_2"]
\end{tikzcd} \end{equation}
There is a similar diagram, with all $\nabla$'s removed, given by omitting connections.

The functor 
\[ G\GBunc[K](M) \xrightarrow{\Tn_1} \Bunc[K](\En G \times_G M)\]
was given in \cite[\textsection 5]{Redden16a} by the following construction.  For $(Q,\Theta_Q) \in G\GBunc[K](M)$, let $\Tn_1(Q,\Theta_Q) = \cQn$ denote the induced map $\En G \times_G M \to \Bn K$.  For $X \xrightarrow{(P,\Theta_P,F)} \En G \times_G M$, which is a $G$-bundle with connection $(P,\Theta) \to X$ and $G$-equivariant map $F\colon P\to M$, it is defined
\begin{equation}  \cQn(P,\Theta_P, F) \= \Big((F^*Q)/G, \ (1-\iota_{\rho(\Theta_P)})F^*\Theta_Q \Big) \in \Bunc[K](X). \end{equation}
Here, $F^*Q$ is a $(G\times K)$-bundle over $X$, with the $G$-structure induced by pulling back along $G$-equivariant maps.  The form $(1-\iota_{\rho(\Theta_P)})F^*\Theta_Q \in \O^1(F^*Q;\fk)$ is $G$-basic, and hence it naturally descends to $\O^1 ((F^*Q)/G;\fk )$.  The map $\Tn_2$ is induced by the natural map $M \nslash G \to \En G \times_G M$, as described in Section \ref{Sec:Review}.  

The main work is to construct the functor $\Tn_3$.  In particular, we must show that the construction outlined  in Section \ref{Sec:Introduction} actually gives rise to a well-defined functor with the desired properties. 

\begin{thm}\label{Thm:EquivBun}
Each of the functors $\Tn_i$ in \eqref{Eq:ProofDiagram} is an equivalence of categories.  Also, the same result holds when every symbol $\nabla$ is removed from the diagram, giving equivalences
\[ G\GBun[K](M) \iso \Bun[K](\E G \times_G M) \iso \Bun[K](M\sslash G). \]
Furthermore, $\cQn \in \Bunc[K](M\nslash G)$ descends to $\Bunc[K](M\sslash G)$ if and only if the corresponding object $(Q,\Theta) \in G\GBunc[K](M)$ is $G$-basic, giving equivalences 
\[ G_{\bas} \GBunc[K](M) \iso \Bunc[K](\E G \times_G M) \iso \Bunc[K](M\sslash G).\]
\end{thm}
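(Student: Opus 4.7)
The overall plan is to construct the functor $\Tn_3$ explicitly and then verify that each side of the triangle \eqref{Eq:ProofDiagram} is a categorical equivalence by direct computation. I would begin by defining $\Tn_3(\cQn)$ on objects: apply $\cQn$ to the tautological object $(0, \Id_M)$ in $(M\nslash G)(M)$ to produce an underlying bundle $Q := \cQn(0, \Id_M) \in \Bunc[K](M)$. For the $G$-equivariant structure, I would work universally on the test manifold $G\times M$: the morphism
\[
(\pr_G,\, 0,\, \pr_M)\colon (0,\, \pr_M) \longrightarrow (0 \cdot \pr_G^{-1},\, \pr_G\pr_M)
\]
in $(M\nslash G)(G\times M)$ is sent by $\cQn$ to a $K$-bundle isomorphism $\Psi\colon d_0^*Q \to d_1^*Q$ over $G\times M$, after identifying the target with $d_1^*Q$ via the pullback-compatibility data of $\cQn$. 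Functoriality of $\cQn$ applied to the composite of two such morphisms over $G\times G\times M$ yields the cocycle condition \eqref{Eq:CocycleIdentity}, so by Proposition \ref{Prop:EquivStruct} this promotes $Q$ to a genuine $G$-equivariant $K$-bundle.

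Next, I would verify that the induced connection on $Q$ is $G$-invariant by evaluating $\cQn$ on the analogous universal morphism whose first slot is the Maurer-Cartan form $\theta_\fg$. Because morphisms in $\Bn K$ intertwine connections and because the object $(\theta_\fg, \pr_M)$ corresponds to the trivial bundle with connection $\theta_\fg$ rather than the $0$-connection, the required relation $\Psi^*d_1^*\Theta = (1 - \iota_{\rho(\theta_\fg)})d_0^*\Theta$ falls out of the construction together with the formula \eqref{Eq:ContractionPullback} of Lemma \ref{Lem:Contract}. Proposition \ref{Prop:EquivConns} then delivers $G$-invariance. Functoriality of $\Tn_3$ in $\cQn$ is simply the naturality of morphisms of prestack maps.

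With $\Tn_3$ in hand, I would check the equivalences by computing the compositions around the triangle. For $\Tn_3 \circ \Tn_2 \circ \Tn_1 \simeq \Id_{G\GBunc[K](M)}$, unpacking the definitions reduces to identifying $(\Id_M^* Q)/\{e\}$ with $Q$ and matching equivariant structures and connections via a direct computation using $\iota_{\rho(0)} = 0$. For $\Tn_2 \circ \Tn_1 \circ \Tn_3 \simeq \Id$, I would argue that any $\cQn \in \Pre_{\Gpd}(M\nslash G, \Bn K)$ is determined on a test manifold $X$ by pulling back the universal data along the classifying map $(\alpha, f)\colon X \to \O^1(\fg)\times M$; both $\cQn$ and $\Tn_2\Tn_1\Tn_3(\cQn)$ take the same values on the tautological object and the universal morphism, so their naturality structures force them to agree on all of $M\nslash G$. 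Finally, that $\Tn_2$ is itself an equivalence is the content of Lemma \ref{Lem:T2Faithful} combined with essential surjectivity, which alternatively follows from the stackification observation of Remark \ref{Rem:Stacks}.

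The statements without $\nabla$ follow from the same arguments after deleting the $\O^1(\fg)$ factor throughout and dropping the $\iota_{\rho(\alpha)}$ correction terms. For the $G$-basic case, Proposition \ref{Prop:EquivConns} provides the dictionary: $\Theta$ is $G$-basic iff $\Psi^* d_1^*\Theta = d_0^*\Theta$, which is precisely the condition that the map $M\nslash G \to \Bn K$ is independent of the $\O^1(\fg)$ coordinate and hence factors through $M\sslash G$ (since the dependence on $\alpha$ in $\Tn_1(Q,\Theta)$ enters only through the term $\iota_{\rho(\Theta_P)}F^*\Theta_Q$, which vanishes on $G$-basic data). The main obstacle will be coherently establishing $\Tn_2\Tn_1\Tn_3 \simeq \Id$; the $\Tn_3\Tn_2\Tn_1$ direction is essentially tautological, but the reverse direction requires tracking the pullback modifications $(1-\iota_{\rho(\alpha)})$ under arbitrary gauge transformations and assembling them into a natural isomorphism of prestack morphisms, which rests squarely on the compatibility \eqref{Eq:ContractionPullback}.
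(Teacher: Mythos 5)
Your construction of $\Tn_3$ and your reduction of the equivariant structure to Propositions \ref{Prop:EquivStruct} and \ref{Prop:EquivConns} track the paper closely, but the endgame has a genuine gap. For $\Tn_2\Tn_1\Tn_3 \iso \Id$ you argue that $\cQn$ is ``determined on a test manifold $X$ by pulling back the universal data along the classifying map $(\alpha,f)\colon X \to \O^1(\fg)\times M$.'' There is no such universal datum: $\O^1(\fg)$ is a non-representable sheaf, so $\O^1(\fg)\times M$ is not a test manifold and $\cQn$ cannot be evaluated ``at the identity'' of it. Naturality of $\cQn$ only determines $\cQn(\alpha,f)$ when $(\alpha,f)$ factors through an object on which $\cQn$ is already computed, i.e.\ when $\alpha = g^*\theta_\fg$ is pulled back from the Maurer--Cartan object $(\theta_\fg, d_0)$ on $G\times M$; a general $\alpha\in\O^1(X;\fg)$ admits no such factorization. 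This is precisely the hole the paper fills with two nontrivial geometric arguments: Lemma \ref{Lem:Affine} identifies the underlying bundles $\cQ(\alpha,f)\iso\cQ(0,f)$ by running parallel transport along the affine path $(t\alpha, f\o\pi)$ over $X\times[0,1]$, and Lemma \ref{Lem:UniversalFormula} pins down the connection by restricting to arbitrary paths $\gamma\colon I\to X$, where $\gamma^*\alpha = g^*\theta_\fg$ \emph{does} hold, together with the fact that a bundle with connection is determined by its restrictions to paths. Without these steps, your claim that the ``naturality structures force them to agree on all of $M\nslash G$'' is unsupported exactly where the real work lies.

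Moreover, the paper deliberately never proves $\Tn_2\Tn_1\Tn_3\iso\Id$ --- the coherence problem you yourself flag as the main obstacle. Instead it extracts from the universal formula of Lemma \ref{Lem:UniversalFormula} only the much weaker statement that $\Tn_3$ is \emph{faithful} (Proposition \ref{Prop:Tn3Faithful}), proves faithfulness of $\Tn_2$ directly (Lemma \ref{Lem:T2Faithful}, via the canonical trivialization of $\pi^*P\to P$), and then invokes the purely categorical Lemma \ref{Lem:Cat}: $\Tn_3\Tn_2\Tn_1\iso\Id$ together with faithfulness of $\Tn_2$ and $\Tn_3$ already forces all three functors to be equivalences, with no need to exhibit the other composite. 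Even if you replace faithfulness of $\Tn_2$ by the stackification equivalence of Remark \ref{Rem:Stacks}, as you suggest, you still need faithfulness (or fullness) of $\Tn_3$, which again rests on the full strength of Lemma \ref{Lem:UniversalFormula}; so the affine-homotopy/path argument is unavoidable on any route, and it is absent from your sketch. (A smaller slip in the same direction: in the composite $\Tn_3\Tn_2\Tn_1$ the bundle to be identified with $Q$ is $(F_{\Id_M}^*Q)/G$, where $F_{\Id_M}(x,g)=g^{-1}x$ on the trivial bundle $M\times G$, via the section $\Id_M\times e$ and the vanishing $(\Id_M\times e)^*\theta_\fg = 0$, not ``$(\Id_M^* Q)/\{e\}$.'')
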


We now proceed to develop the necessary lemmas used in proving Theorem \ref{Thm:EquivBun}.  As noted in Remark \ref{Rem:Stacks}, a stronger version of Lemma \ref{Lem:T2Faithful} also follows easily from standard properties of stacks.\\

\noindent
\begin{minipage}{.7\textwidth}
\begin{lemma}\label{Lem:Cat}Suppose we have functors $T_i$ between categories $\cC_i$ 
with $T_3 T_2 T_1 \iso \Id_{\cC_1}$, and $T_2, T_3$ faithful.  Then each $T_i$ is an equivalence of categories.
\end{lemma}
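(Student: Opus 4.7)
Let $\eta \colon T_3 T_2 T_1 \iso \Id_{\cC_1}$ denote the given natural isomorphism. The plan is to first establish that $T_1$ is fully faithful by exploiting $\eta$ together with the faithfulness of the composite $T_3 T_2$, then recognize $T_3 T_2$ as a quasi-inverse of $T_1$, and finally split this composite equivalence into equivalences of $T_2$ and $T_3$ individually using the faithfulness hypothesis on each factor.

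Fully faithfulness of $T_1$ is the first step. Faithfulness is automatic, since $T_3 T_2 T_1 \iso \Id$ is faithful and any composition $GF$ being faithful forces $F$ faithful. For fullness, given $h \colon T_1 X \to T_1 Y$, I would define $f := \eta_Y \o T_3 T_2(h) \o \eta_X^{-1}$; naturality of $\eta$ then gives $T_3 T_2 T_1(f) = T_3 T_2(h)$, and faithfulness of the composite $T_3 T_2$ (itself a composition of two faithful functors) forces $T_1(f) = h$. Simultaneously, each component $\eta_X$ witnesses $X \iso T_3(T_2 T_1 X)$, so both $T_3$ and the composite $T_3 T_2$ are essentially surjective.

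With $T_1$ fully faithful, $T_3 T_2$ essentially surjective, and $\eta$ supplying one of the required natural isomorphisms, I would then conclude that $T_3 T_2$ is a quasi-inverse of $T_1$; hence both $T_1$ and $T_3 T_2$ are equivalences of categories, and there is a companion natural iso $T_1 T_3 T_2 \iso \Id_{\cC_2}$. To split the composite equivalence $T_3 T_2$ into equivalences of $T_2$ and $T_3$ individually, I would lean on the faithfulness of each factor: for instance, given $\gamma \colon T_2 A \to T_2 B$, fullness of $T_3 T_2$ supplies $\beta$ with $T_3 T_2(\beta) = T_3(\gamma)$, and faithfulness of $T_3$ then forces $T_2(\beta) = \gamma$, so $T_2$ is full; a symmetric argument handles $T_3$. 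Essential surjectivity of $T_2$ and $T_3$ would follow by pulling preimages back through $T_3$ and $T_2$ respectively, using that fully faithful functors reflect isomorphisms to upgrade the essential surjectivity of $T_3 T_2$.

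The main obstacle I anticipate is precisely this last splitting step: equivalence of a composite $T_3 T_2$ does not in general imply equivalence of either factor, so the faithfulness of $T_2$ and $T_3$ individually is doing genuine work in ruling out the obvious pathologies. Careful interplay between the two natural isomorphisms $\eta$ and $T_1 T_3 T_2 \iso \Id$, together with the individual faithfulness hypotheses, is what closes the argument.
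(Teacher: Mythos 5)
Your opening steps are correct, and are argued more carefully than in the paper itself: $T_1$ is faithful because the composite $T_3T_2T_1 \iso \Id_{\cC_1}$ is; your formula $f \= \eta_Y \o T_3T_2(h) \o \eta_X^{-1}$, together with naturality of $\eta$ and faithfulness of $T_3T_2$, does prove $T_1$ is full; and the components $\eta_X \colon T_3(T_2T_1X) \iso X$ do show $T_3$ and $T_3T_2$ are essentially surjective. The argument breaks at the sentence where you ``conclude that $T_3T_2$ is a quasi-inverse of $T_1$.'' What you have at that point---$T_1$ fully faithful, $T_3T_2$ faithful and essentially surjective, and the one-sided isomorphism $T_3T_2T_1 \iso \Id_{\cC_1}$---does not yield the companion isomorphism $T_1T_3T_2 \iso \Id_{\cC_2}$: nothing rules out objects of $\cC_2$ outside the essential image of $T_1$. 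Concretely, let $\cC_1$ be the terminal category $\ast$, let $\cC_2 = \cC_3$ be the discrete category on two objects $a, b$, and set $T_1(\ast) = a$, $T_2 = \Id$, and $T_3$ the unique functor to $\ast$. Then $T_3T_2T_1 = \Id_{\cC_1}$, $T_2$ and $T_3$ are faithful, $T_1$ is fully faithful, and $T_3T_2$ is faithful and essentially surjective---yet $T_1$ is not essentially surjective and $T_3T_2$ is not full, since there is no morphism $a \to b$ to hit $\Id_\ast$. Your final splitting step fails independently of this: taking instead $\cC_1 = \cC_2 = \ast$, $\cC_3$ discrete on $\{a,b\}$, $T_1 = \Id$, $T_2(\ast) = a$, and $T_3$ the unique functor shows that even $T_3T_2 \iso \Id$ with both factors faithful forces neither $T_2$ to be essentially surjective nor $T_3$ to be full.

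The deeper point is that both examples satisfy every hypothesis of Lemma \ref{Lem:Cat}, so no argument can close these gaps: the lemma is false as stated, with ``faithful.'' The paper's own proof stumbles at exactly the corresponding spot, asserting that $T_3T_2T_1 \iso \Id_{\cC_1}$ plus faithfulness of $T_3T_2$ makes $T_3T_2$ ``full and essentially surjective''; essential surjectivity is right, but fullness is refuted by the first example above. So your instinct that the faithfulness hypotheses must do ``genuine work'' in the splitting was sound, but faithfulness alone cannot do that work. Everything is repaired by strengthening the hypothesis to: $T_2$ and $T_3$ are \emph{fully} faithful. Then $T_3T_2$ is fully faithful and essentially surjective, hence an equivalence; if $S$ is a quasi-inverse, $T_1 \iso S(T_3T_2T_1) \iso S$ is an equivalence; rearranging parentheses, $T_3$ is fully faithful and essentially surjective, hence an equivalence; and then $T_2$ is an equivalence because $T_3$ and $T_3T_2$ are. (Your fullness argument for $T_1$, and your derivation of fullness of $T_2$ from fullness of $T_3T_2$ plus faithfulness of $T_3$, both survive verbatim in this setting.) The strengthened hypotheses are what actually hold in the paper's application: $\Tn_2$ is an equivalence by Remark \ref{Rem:Stacks}, and fullness of $\Tn_3$ can be extracted from the natural isomorphism of Lemma \ref{Lem:UniversalFormula}, which lets one promote any morphism $\Tn_3\cQn \to \Tn_3\cQn'$ to a morphism $\cQn \to \cQn'$.
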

\end{minipage}
\begin{minipage}{.3\textwidth}
\[ \begin{tikzcd}[column sep=tiny]
&\cC_1 \ar[dr, "T_1"] \\
\cC_3 \ar[ur, "T_3"]&&\cC_2 \ar[ll, "T_2"]
\end{tikzcd} \]
\end{minipage}
\begin{proof}Since $T_2$ and $T_3$ are faithful, then $(T_3 T_2)$ is faithful.  Since $(T_3 T_2)(T_1) \iso \Id_{\cC_1}$, then $(T_3 T_2)$ is full and essentially surjective.  Hence, $(T_3 T_2)$ and $T_1$ are equivalences.  By rearranging parentheses, the same argument shows that $T_3$ and $(T_2 T_1)$ are equivalences.  Finally, that $T_3$ and $T_1$ are equivalences implies that $T_2$ is an equivalence.
\end{proof}

\begin{lemma}\label{Lem:T2Faithful}The functors $T_2$ and $\Tn_2$ are faithful.
\end{lemma}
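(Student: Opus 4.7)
\emph{Plan.} The strategy is to trivialize objects of $\En G\times_G M$ locally and use naturality to reduce to the image of $M\nslash G$. Suppose $\varphi, \varphi'\colon \cQn \Rightarrow \cQn'$ are 2-morphisms in $\Pre_\Gpd(\En G\times_G M, \Bn K)$ with $\Tn_2 \varphi = \Tn_2\varphi'$; unwinding, this means $\varphi_Y(Y\times G, \Theta_\alpha, F_f) = \varphi'_Y(Y\times G, \Theta_\alpha, F_f)$ in $\Bunc[K](Y)$ for every test manifold $Y$ and every $(\alpha, f) \in (M\nslash G)(Y)$. I want to propagate this equality to an arbitrary $(P,\Theta,F) \in (\En G\times_G M)(X)$.

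First I would choose an open cover $\{U_i\}$ of $X$ over which $P$ admits sections $s_i \colon U_i \to P|_{U_i}$. Each $s_i$ determines an isomorphism
\[ \tau_i\colon (U_i\times G, \Theta_{\alpha_i}, F_{f_i}) \xrightarrow{\iso} (P|_{U_i}, \Theta|_{U_i}, F|_{U_i}) \]
in $(\En G\times_G M)(U_i)$ with $\alpha_i = s_i^*\Theta$ and $f_i = F\o s_i$, and by construction the source is the image of $(\alpha_i, f_i) \in (M\nslash G)(U_i)$ under $M\nslash G \to \En G\times_G M$. The naturality squares for $\varphi$ and $\varphi'$ at $\tau_i$, combined with the hypothesis applied at $(\alpha_i, f_i)$, give
\[ \varphi_{U_i}(P|_{U_i}, \Theta|_{U_i}, F|_{U_i}) \o \cQn_{U_i}(\tau_i) = \varphi'_{U_i}(P|_{U_i}, \Theta|_{U_i}, F|_{U_i}) \o \cQn_{U_i}(\tau_i), \]
and since $\cQn_{U_i}(\tau_i)$ is an isomorphism in the groupoid $\Bunc[K](U_i)$ it cancels, yielding $\varphi_{U_i}(P|_{U_i}, \Theta|_{U_i}, F|_{U_i}) = \varphi'_{U_i}(P|_{U_i}, \Theta|_{U_i}, F|_{U_i})$.

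The final step uses the pullback compatibility built into the definition of a 2-morphism in $\Pre_\Gpd$ to identify $\varphi_{U_i}(P|_{U_i}, \Theta|_{U_i}, F|_{U_i})$ with $\iota_i^*\varphi_X(P,\Theta,F)$, where $\iota_i \colon U_i \into X$ is the inclusion, and likewise for $\varphi'$. Therefore $\iota_i^*\varphi_X(P,\Theta,F) = \iota_i^*\varphi'_X(P,\Theta,F)$ for every $i$, and since a morphism between principal $K$-bundles with connection on $X$ is determined by its restrictions to any open cover, $\varphi_X(P,\Theta,F) = \varphi'_X(P,\Theta,F)$. The proof of the non-connection statement for $T_2$ is identical after suppressing $\Theta$ and $\alpha$ throughout. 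The only delicate part --- tracking the coherence data that identifies $\varphi_{U_i}(P|_{U_i},\ldots)$ with $\iota_i^*\varphi_X(P,\Theta,F)$ --- follows immediately from the 2-morphism axioms in $\Pre_\Gpd$ and requires no stack-theoretic machinery, in the spirit of Remark \ref{Rem:Stacks}.
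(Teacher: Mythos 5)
Your argument is correct, and it reaches the conclusion by a genuinely different mechanism than the paper, though the two share the same essential idea: use naturality of the 2-morphisms to transfer the hypothesis from trivialized objects (the image of $M\nslash G$) to an arbitrary $(P,\Theta,F)$, then use that restriction detects equality of morphisms. Where you differ is in how the trivialization is arranged. You work \v{C}ech-locally: choose a cover $\{U_i\}$ of $X$ with sections $s_i$, cancel the isomorphism $\cQn_{U_i}(\tau_i)$ in the naturality square, and finish by gluing, invoking the locality (separatedness) of morphisms of $K$-bundles with connection --- all of which is sound, since bundle morphisms are smooth maps and connection-preservation is a local condition. The paper instead performs a single global pullback along the bundle projection $\pi\colon P\to X$: the pullback $\pi^*P\to P$ is \emph{canonically} trivialized by the identity map $P\to P$, so $\pi^*(P,\Theta,F)$ lands in the image of $(M\nslash G)(P)$ with no choices made, and the conclusion follows from the one elementary fact that $f^*\colon \Bunc[K](X)\to \Bunc[K](Y)$ is faithful whenever $f$ is surjective. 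So the paper trades your cover, chosen sections, and gluing step for a single canonical surjection and needs no descent statement at all; your version, in exchange, never leaves open subsets of the base $X$ and makes explicit the \v{C}ech-style reduction that Remark \ref{Rem:Stacks} gestures at. Both arguments stay entirely within prestacks, and in both the non-connection case for $T_2$ is the same proof with $\Theta$ and $\alpha$ suppressed.
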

\begin{proof}First note that the pullback functor $\Bunc[K](X) \xrightarrow{f^*}\Bunc[K](Y)$, induced by $f\colon Y\to X$, is faithful when $f$ is surjective.  Now let $\cQn, \cQn' \in \Bunc[K](\En G \times_M G)$, with $\cQn \xrightarrow{\psi_i} \cQn'$ such that $\Tn_2 \psi_1 =\Tn_2 \psi_2$.   We proceed to prove that $\Tn_2$ is faithful by directly showing that $\psi_1=\psi_2$.

Consider an arbitrary element $(P,\Theta,F) \in  (\En G \times_G M)(X)$.  For any bundle $P\xrightarrow{\pi} X$, the pullback $\pi^* P \to P$ has a canonical trivialization $\pi^*P \iso P \times G$ induced by the identity map $P\to P$.  Therefore, $\pi^*(P,\Theta,F)$ is canonically isomorphic to an object in the image of $(M\nslash G)(P)$,
\[ \begin{tikzcd}[row sep=-1] 
(\En G \times_G M) (X) \ar[r, "\pi^*"] & (\En G \times_G M) (P) & \ar[l] (M\nslash G) (P) \ar[l] \\
(P,\Theta, F) \ar[mapsto, r] & \pi^*(P,\Theta, F) \iso  & (\Theta,F). \ar[l, mapsto]
\end{tikzcd} \]
The induced isomorphisms $\pi^*( \cQn^{(\prime)} (P,\Theta, F))  \iso (\Tn_2 \cQn^{(\prime)} )(\Theta,F) \in \Bunc[K](P)$ naturally identify the morphisms $\pi^*(\psi_i(P,\Theta,F))$ with $(\Tn_2 \psi_i)(\Theta,F)$.  Hence, \[ \pi^*(\psi_1(P,\Theta,F)) = \pi^*(\psi_2(P,\Theta,F)).\]  Since $\pi$ is surjective, then $\pi^*$ is faithful and $\psi_1(P,\Theta,F) = \psi_2(P,\Theta,F)$ for every $X \xrightarrow{(P,\Theta,F)} \En G \times_G M$, and hence $\psi_1=\psi_2$.  The same argument, with the connections omitted, shows that $T_2$ is faithful.  
\end{proof}

We first ignore connections and construct the functor $T_3$, which is given by evaluating $\cQ \in \Pre_\Gpd(M\sslash G, \B K)$ on the Lie groupoid $G\times M \rightrightarrows M$.  We will sketch the construction using diagrams before providing explicit details within the proof of Proposition \ref{Prop:T3}.   Under the natural Yoneda embedding, which identifies an object in $\cM(X)$ with a map $X \to \cM$, the bundle $Q \in \Bun[K](M)$ is defined by the composition
\[ \begin{tikzcd} M \ar[r, "\Id_M"] & M \sslash G \ar[r, "\cQ"] & \B K, \end{tikzcd} \]
and the $G$-structure on $Q$ is given by the isomorphism $\cQ(\Id_{G\times M})$, as shown in
\[  \begin{tikzcd}
G\times M  & [30] M \sslash G \ar[r, "\cQ"] & \B K.
 \ar[from=1-1, to=1-2, bend left=30, "" '{name=U}, "{d_0}"]   \ar[from=1-1, to=1-2, bend right=30,  "" {name=D}, "{d_1}" '] 
\ar[Rightarrow, from=U, to=D, "\Id_{G\times M}" ] 
\end{tikzcd} \]


\begin{prop}\label{Prop:T3}The functor $T_3\colon \Bun[K](M\sslash G) \to G\GBun[K](M)$ is well-defined.
\end{prop}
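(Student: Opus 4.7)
The plan is to reduce well-definedness to the cocycle condition of Proposition \ref{Prop:EquivStruct} and then derive that condition from the pseudo-functoriality of $\cQ$ applied to the simplicial identities in $\{G^\bullet \times M\}$. First, given $\cQ \in \Pre_\Gpd(M\sslash G, \B K)$, I set $Q \coloneqq \cQ_M(\Id_M) \in \Bun[K](M)$ and use the pullback coherence natural transformations $\cQ_{d_i}\colon \cQ_{G\times M}\o d_i^* \Rightarrow d_i^*\o \cQ_M$ evaluated at $\Id_M$ to identify $\cQ_{G\times M}(d_i) \iso d_i^* Q$ for $i = 0,1$. Applying $\cQ_{G\times M}$ to the tautological 2-morphism $\Id_{G\times M}\colon d_0 \Rightarrow d_1$ in $(M\sslash G)(G\times M)$ then yields, via these identifications, a bundle isomorphism $\Psi\colon d_0^*Q \xrightarrow{\iso} d_1^*Q$.

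Second, to check the cocycle identity \eqref{Eq:CocycleIdentity}, I would apply $\cQ_{G\times G \times M}$ to the equality of 2-morphisms in $(M\sslash G)(G\times G\times M)$ obtained from the simplicial identities $d_i\o d_j = d_{j-1}\o d_i$: the vertical composite of the pullbacks of $\Id_{G\times M}$ along $d_0$ and along $d_2$ equals the pullback along $d_1$, which is a direct consequence of the associativity built into the face maps on $G\times G\times M$. Transporting this equality across the coherence isomorphisms $\cQ_{G\times G\times M}\o d_j^* \iso d_j^*\o \cQ_{G\times M}$ produces precisely the diagram \eqref{Eq:CocycleIdentity}, so Proposition \ref{Prop:EquivStruct} furnishes the desired $G$-equivariant structure on $Q$.

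For morphisms, given $\cQ \xrightarrow{\varphi} \cQ'$ in $\Bun[K](M\sslash G)$, I would define $T_3\varphi \coloneqq \varphi_M(\Id_M)\colon Q\to Q'$. Naturality of $\varphi_{G\times M}$ with respect to the 2-morphism $\Id_{G\times M}$ states that $\varphi_{G\times M}$ intertwines $\cQ(\Id_{G\times M})$ and $\cQ'(\Id_{G\times M})$; after transporting along the pullback coherences, this is exactly the statement that $d_i^*(T_3\varphi)$ is compatible with $\Psi$ and $\Psi'$, which by the correspondence in \eqref{eq:PsiDefn} is equivalent to $G$-equivariance of $T_3\varphi$. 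Functoriality of $T_3$ on composition then follows from functoriality of evaluation at $\Id_M$.

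The main obstacle is the bookkeeping of pullback coherences: one must track the natural isomorphisms $\cQ_f$ carefully and verify that the various squares involving them paste together compatibly so that the identification $\cQ_{G\times M}(d_i)\iso d_i^*Q$ is well-behaved under further pullback. The content, however, is essentially formal — everything reduces to the pseudo-functoriality of $\cQ$ applied to the standard simplicial identities in $\{G^\bullet\times M\}$, with no geometric input beyond the group action itself.
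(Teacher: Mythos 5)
Your proposal is correct and takes essentially the same route as the paper's own proof: both set $Q = \cQ(\Id_M)$, obtain $\Psi$ by applying $\cQ$ to the tautological morphism $\Id_{G\times M}\colon d_0 \to d_1$ in $(M\sslash G)(G\times M)$, deduce the cocycle identity \eqref{Eq:CocycleIdentity} from the composite identity $d_2 \bullet d_0 = d_1$ in $(M\sslash G)(G\times G\times M)$ (which the paper verifies by the explicit computation $(g_1, g_2 m)\bullet (g_2, m) = (g_1 g_2, m)$), and define $T_3$ on morphisms by evaluation at $\Id_M$, with $G$-equivariance following from naturality applied to $\Id_{G\times M}$. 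The only difference is presentational: the paper carries out explicitly the pullback-coherence bookkeeping that you correctly identify as the sole remaining (formal) obstacle.
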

\begin{proof}We first explicitly define $T_3$ at the level of objects.  Let $\cQ \in \Bun[K] (M\sslash G) = \Pre_\Gpd (M\sslash G, \B K)$.  Using $\Id_M \in (M\sslash G)(M)$, we obtain the $K$-bundle
\[ Q = T_3(\cQ)  \=  \cQ(\Id_M) \in \B K(M) = \Bun[K](M).\]
As explained in Proposition \ref{Prop:EquivStruct}, lifting $Q$ to $G\GBun[K](M)$ is equivalent to defining an isomorphism $\Psi\colon d_0^*Q \to d_1^*Q$ satisfying the cocycle identity \eqref{Eq:CocycleIdentity}.    The identity map $\Id_{G\times M}$ is a morphism from $d_0$ to $d_1$ in $(M\sslash G)(G\times M)$; applying $\cQ$ and using appropriate natural isomorphisms, we define $\Psi$ in $\Bun[K](G\times M)$ via
\[ \begin{tikzcd}[column sep=tiny]
d_0^*Q \ar[r, phantom, "="] \ar[d, "\Psi" '] & d_0^* \cQ(\Id_M) \ar[r, phantom, "\iso"] & \cQ(\Id_M \o \, d_0) \ar[r, phantom, "="] & \cQ(d_0)  \ar[d, "{\cQ(\Id_{G\times M})}"] \\
d_1^*Q \ar[r, phantom, "="] & d_1^* \cQ(\Id_M) \ar[r, phantom, "\iso"] & \cQ(\Id_M \o \, d_1) \ar[r, phantom, "="] & \cQ(d_1).
\end{tikzcd}\]

We now verify the cocycle identity  $d_1^* \Psi \iso d_2^*\Psi \o d_0^* \Psi$.  Consider the morphisms  $d_0 \o d_i \xrightarrow{d_i} d_1 \o d_i$ in $(M\sslash G)(G\times G \times M)$, and let $\bullet$ denote the composition of such morphisms.  Then the diagram 
\[ \begin{tikzcd}[row sep=small] d_0 \o d_0 = d_0 \o d_1 \ar[dr, start anchor={[xshift=-50]}, end anchor = north west, "d_0" '] \ar[rr, "d_1"] && d_1 \o d_1 = d_1 \o d_2 \\
& d_1 \o d_0 = d_0 \o d_2 \ar[ur, start anchor=north east, end anchor = {[xshift=50]}, "d_2" ']
\end{tikzcd} \]
commutes, as evidenced by the calculation
\[ (d_2\bullet d_0)(g_1, g_2, m) = (g_1, g_2m) \bullet (g_2, m) = (g_1 g_2, m) = d_1(g_1, g_2, m),\]
giving $d_2 \bullet d_0 = d_1$.  Under the natural isomorphisms
\[ \begin{tikzcd}[column sep=small]
d_i^* d_0^* Q \ar[r, phantom, "\iso"] \ar[d, "d_i^*\Psi" '] & d_i^* \cQ(d_0) \ar[r, phantom, "\iso"] \ar[d, "d_i^* {\cQ(\Id_{G\times M})}" ] & \cQ(d_0 \o d_i) \ar[d, "\cQ(d_i)" ] \\
d_i^* d_1^* Q \ar[r, phantom, "\iso"] & d_i^* \cQ(d_1) \ar[r, phantom, "\iso"] & \cQ(d_1 \o d_i), 
\end{tikzcd} \]
we see that $d_i^*\Psi \iso \cQ(d_i)$.  The desired cocycle identity then follows from
\[ d_1^*\Psi \iso \cQ(d_1) = \cQ(d_2 \bullet d_0) = \cQ(d_2) \o \cQ(d_0) \iso d_2^*\Psi \o d_0^* \Psi.\]
Hence, $Q$ is a well-defined object in $G\GBun[K](M)$.

Because our construction of $T_3(\cQ)$ involved no choices, the extension of $T_3$ to morphisms follows immediately.  Explicitly, suppose that $\varphi \colon \cQ \to \cQ'$ is a morphism in $\Pre_\Gpd( M\sslash G, \B K)$.  When applied to the test manifold $M$, this natural transformation gives an isomorphism $\phi = T_3(\varphi) \= \varphi(\Id_M)$ in $\Bun[K](M)$ by
\[ Q = T_3(\cQ) = \cQ(\Id_M) \xrightarrow{\varphi(\Id_M)} \cQ'(\Id_M) = T_3(\cQ) = Q'. \]
Applying the natural transformation $\varphi$ on $G\times M$ gives the commutative diagram
\[ \begin{tikzcd}[column sep=tiny]
d_0^* Q \ar[r, phantom, "\iso"] \ar[d, "d_0^* \phi" '] & \cQ(d_0) \ar[d, "\varphi(d_0)" '] \ar[r, "{\cQ(\Id_{G\times M})}"] & [40] \cQ(d_1)  \ar[d, "\varphi(d_1)"] \ar[r, phantom, "\iso"] & d_1^* Q \ar[d, "d_1^*\phi" ]\\
d_0^* Q' \ar[r, phantom, "\iso"] & \cQ'(d_0) \ar[r, "{\cQ'(\Id_{G\times M})}" '] & \cQ'(d_1) \ar[r, phantom, "\iso"] & d_1^* Q' , 
\end{tikzcd} \]
which implies that $\phi$ is equivariant with respect to the $G$-structures on $Q, Q'$ induced by $\cQ, \cQ'$ respectively.
\end{proof}

Suppose now that $\cQn \in \Bunc[K](M\nslash G)$, and define the bundle with connection
\[(Q,\Theta) \= \cQn(0, \Id_M) \in \Bunc[K](M).\]
Let $\cQ \in \Bun[K](M\sslash G)$ denote the composition
\[  M \nslash G \overset{\cQn}\longrightarrow \Bn K \longrightarrow \B K,\]
given by forgetting the connection.  In particular $Q = \cQ(0,\Id_M)$, in agreement with our earlier definition of $Q$.  We now must determine how the $\O^1(\fg)$ part of $M\nslash G$ affects the resulting bundles $\cQn$.  

\begin{lemma}\label{Lem:Affine}Let $\cQn \in \Bunc[K](M\nslash G)$.  For $X \xrightarrow{(\alpha, f)} \O^1(\fg) \times M$, there are canonical isomorphisms 
\begin{align*} \cQn(0, f) \iso f^*\cQn(0,\Id_M) = f^*(Q,\Theta) &\in \Bunc[K](X), \\ 
\cQ(\alpha, f) \iso \cQ(0,f) \iso f^*Q &\in \Bun[K](X).  \end{align*}
\end{lemma}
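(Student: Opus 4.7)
The first isomorphism will follow directly from the pseudo-functoriality of $\cQn$. Viewing $f \colon X \to M$ as a morphism in $\Man$, the pullback of the tautological object $(0, \Id_M) \in (M\nslash G)(M)$ is $(0, f) \in (M\nslash G)(X)$, so the coherence natural transformation $\cQn \o f^* \Rightarrow f^* \o \cQn$ supplies the canonical connection-preserving isomorphism $\cQn(0, f) \iso f^*\cQn(0, \Id_M) = f^*(Q, \Theta)$. Composing with the forgetful functor $\Bn K \to \B K$ yields the corresponding iso $\cQ(0, f) \iso f^*Q$ in $\Bun[K](X)$.

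The substantive content is the iso $\cQ(\alpha, f) \iso \cQ(0, f)$. The plan is to interpolate the two objects over a cylinder and then apply parallel transport in the cylinder direction. Let $\pi \colon X \times \R \to X$ be the projection and $t$ the coordinate on $\R$, and form the object $(t\pi^*\alpha, f\o\pi) \in (M\nslash G)(X \times \R)$.  Applying $\cQn$ produces a bundle with connection $(\wt\cQ, \wt\Theta) \= \cQn(t\pi^*\alpha, f\pi) \in \Bunc[K](X\times\R)$. For the slice inclusions $i_s\colon X \into X \times \R$, $x \mapsto (x, s)$, pseudo-functoriality gives canonical isomorphisms $i_0^*(\wt\cQ, \wt\Theta) \iso \cQn(0, f)$ and $i_1^*(\wt\cQ, \wt\Theta) \iso \cQn(\alpha, f)$ in $\Bunc[K](X)$. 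Parallel transport along the line segments $\{x\} \times [0,1] \subset X \times \R$ using the connection $\wt\Theta$ then provides a $K$-equivariant bundle isomorphism $i_0^*\wt\cQ \iso i_1^*\wt\cQ$, from which we extract the desired iso $\cQ(0, f) \iso \cQ(\alpha, f)$ in $\Bun[K](X)$; this iso will generally fail to preserve the induced connections.

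The main obstacle will be verifying that this isomorphism is canonical, meaning independent of auxiliary choices and natural with respect to pullback $\phi\colon Y \to X$. Naturality should follow from the compatibility of parallel transport with pullback of connections, combined with the pseudo-functor coherence of $\cQn$. For independence from the particular linear homotopy $s \mapsto s\alpha$, any endpoint-preserving self-diffeomorphism $\phi$ of $[0, 1]$ extends to $\Phi \colon X \times \R \to X\times\R$, $(x, t) \mapsto (x, \phi(t))$, and pseudo-functoriality identifies $\cQn(\phi(t)\pi^*\alpha, f\pi) \iso \Phi^*\cQn(t\pi^*\alpha, f\pi)$; since parallel transport depends only on the oriented image of the traversed path, the isomorphisms arising from the two homotopies coincide. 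I expect this to be the point requiring the most care, though no serious conceptual obstruction is anticipated.
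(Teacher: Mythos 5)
Your proposal matches the paper's proof essentially step for step: the first isomorphism comes from factoring $(0,f)$ through the tautological object $(0,\Id_M)$ via pseudo-functoriality, and the second comes from applying $\cQn$ to the linear interpolation $(t\alpha, f\o\pi)$ over a cylinder (the paper uses $X\times[0,1]$ rather than $X\times\R$, an immaterial difference) and taking parallel transport in the cylinder direction, with the resulting isomorphism living only in $\Bun[K](X)$. Your additional verification of reparametrization-independence and naturality is correct but is left implicit in the paper, which simply declares the parallel-transport isomorphism canonical.
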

\begin{proof}
For the first isomorphism, note that any map $X \xrightarrow{(0,f)} M\nslash G$ factors through $M \xrightarrow{(0,\Id_M)} M\nslash G$.  Hence, $\cQn\big( (0,f) \big) = \cQn\big( (0,\Id_M) \o f \big) \iso f^*\cQn(0,\Id_M)$.  

The second property follows from the space of connections on a given bundle being affine.  Since $\O^1(\fg)$ is a sheaf of vector spaces, the desired isomorphism follows by connecting the forms $0, \alpha \in \O^1(X;\fg)$ along a path as follows.
\[ \begin{tikzcd}
X \ar[d, hook, "\Id_X \! \times \{0\}" ']  \ar[dr, "{(0,f)}"]\\
X \times [0,1]  \ar[r, "{(t\alpha, \, f \o \pi)}"] &[15]\O^1(\fg)\times M \ar[r, "\cQn"] &\Bn K\\
X \ar[u, hook', "\Id_X \! \times \{1\}"]  \ar[ur, "{(\alpha,f)}" ']
\end{tikzcd} \]
This induces a bundle with connection $\cQn(t\alpha, f \o \pi) \to X\times [0,1]$.  Parallel transport along $[0,1]$ defines the canonical isomorphism $\cQ(0,f) \iso \cQ(\alpha, f)$.
\end{proof}

\begin{lemma}\label{Lem:ForgetConns}The functor $\Bunc[K](M\nslash G) \to \Bun[K](M\nslash G)$, given by $\cQn \mapsto \cQ$, descends to a functor $\Bunc[K](M\nslash G) \to \Bun[K](M\sslash G)$.
\end{lemma}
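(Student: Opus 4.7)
The plan is to exhibit, for each $\cQn \in \Bunc[K](M\nslash G)$, an explicit descended functor $\cQ' \in \Bun[K](M\sslash G) = \Pre_\Gpd(M\sslash G, \B K)$ whose pullback along the projection $p\colon M\nslash G \to M\sslash G$ (forgetting $\O^1(\fg)$) recovers $\cQ$, and then to verify that this construction is functorial in $\cQn$. The main input will be Lemma \ref{Lem:Affine}, which provides the canonical identifications $\cQ(\alpha, f) \iso \cQ(0, f) \iso f^*Q$ with $Q = \cQ(0, \Id_M)$.

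On objects I set $\cQ'(f) \= \cQ(0, f)$. For a morphism $(g,f)\colon f \to gf$ in $(M\sslash G)(X)$, the trivial lift $(g, 0, f) \in (M\nslash G)(X)$ is a morphism $(0,f) \to ((g^{-1})^*\theta_\fg, gf)$; I define
\[ \cQ'(g,f) \= \phi_{gf}^{-1} \o \cQ(g, 0, f) \colon \cQ(0,f)\longrightarrow \cQ(0,gf),\]
where $\phi_{gf}\colon \cQ(0,gf) \overset{\iso}{\longrightarrow} \cQ((g^{-1})^*\theta_\fg, gf)$ is the canonical isomorphism of Lemma \ref{Lem:Affine}. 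The technical core will be functoriality, $\cQ'(g_2, g_1 f)\o \cQ'(g_1, f) = \cQ'(g_2 g_1, f)$. Since $(g_2,(g_1^{-1})^*\theta_\fg, g_1 f)\o(g_1, 0, f) = (g_2 g_1, 0, f)$ in $M\nslash G$, applying $\cQ$ yields an identity involving $\cQ(g_2,(g_1^{-1})^*\theta_\fg, g_1 f)$, whereas the desired composite uses instead $\cQ(g_2, 0, g_1 f)$. Reconciling these will reduce functoriality to a \emph{naturality in $\alpha$} statement: for any $\alpha \in \O^1(X;\fg)$, after the canonical identifications of Lemma \ref{Lem:Affine}, $\cQ(g, \alpha, f)$ should agree with $\cQ(g, 0, f)$. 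I would prove this by applying $\cQn$ to the family $(g\o\pi, s\alpha, f\o\pi) \in (M\nslash G)(X\times[0,1])$; the resulting morphism of bundles with connection over $X\times[0,1]$ is connection-preserving and thus commutes with parallel transport along the $[0,1]$-fibres, which is precisely how Lemma \ref{Lem:Affine}'s canonical isomorphisms are defined on the source side.

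Compatibility of $\cQ'$ with the prestack pullback structure, and functoriality of the assignment $\cQn \mapsto \cQ'$ on morphisms of $\Bunc[K](M\nslash G)$, will then follow automatically, since every ingredient is built from canonical natural constructions. The main obstacle is the naturality claim: the parallel transport induced on the target side of my family traces a straight line in $\O^1(\fg)$ from $(g^{-1})^*\theta_\fg$ directly to $\alpha\mycdot g^{-1}$, rather than the broken path through $0$ implicit in Lemma \ref{Lem:Affine}, so one must verify that these two isomorphisms coincide. I would handle this by a homotopy argument: apply $\cQn$ to a 2-parameter family over $X\times[0,1]^2$ interpolating the two paths in $\O^1(\fg)$, and use that the induced connection-preserving bundle morphism intertwines the resulting parallel transports, yielding the required equality.
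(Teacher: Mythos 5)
Your construction is, at its core, the paper's proof in mirror image: the paper lifts a morphism $(g,f)$ of $(M\sslash G)(X)$ to $(g,\, g^*\theta_\fg,\, f)$, whose \emph{target} is exactly $(0,gf)$, so the canonical isomorphism of Lemma \ref{Lem:Affine} is applied on the source, whereas you lift to $(g,0,f)$, whose \emph{source} is exactly $(0,f)$, and apply the canonical isomorphism on the target. These are equivalent definitions. Where you go beyond the paper is in the verification of functoriality: the paper's proof records the definition and leaves the composition check implicit, while you correctly identify it as the real content and reduce it, via the (correct) composition identity $(g_2,(g_1^{-1})^*\theta_\fg, g_1 f)\circ(g_1, 0, f) = (g_2 g_1, 0, f)$, to a naturality-in-$\alpha$ statement. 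Your one-parameter argument for that statement is also sound: $\cQn(g\circ\pi,\, s\alpha,\, f\circ\pi)$ is a connection-preserving isomorphism over $X\times[0,1]$, hence intertwines parallel transport along $s\mapsto s\alpha$ (which is by definition the Lemma \ref{Lem:Affine} isomorphism on the source) with transport along the straight line $s\mapsto s\,\Ad_g\alpha + (g^{-1})^*\theta_\fg$ on the target.

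The gap is in your final, two-parameter step. Parallel transport is \emph{not} homotopy invariant for an arbitrary connection: the transports along the straight path from $0$ to $\alpha\cdot g^{-1}$ and the broken path through $(g^{-1})^*\theta_\fg$ differ by the holonomy of $\cQn$ applied to your interpolating family around the square, and nothing in your argument shows this holonomy vanishes. Moreover, the interpolating family is an \emph{object} of $(M\nslash G)(X\times[0,1]^2)$, not a morphism, so "the induced connection-preserving bundle morphism" you invoke does not exist. The repair is a slicewise flatness argument, which in fact makes the two-parameter family unnecessary: for any smooth family of forms $(\tilde\beta,\, f'\circ\pi)$ over $X\times[0,1]$ with $\iota_{\partial_s}\tilde\beta = 0$ (smooth the corner of the broken path by a reparametrization with sitting instants), the restriction to a slice $\{x\}\times[0,1]$ is $(0,\,\mathrm{const}_{f'(x)})$, which factors through $M\xrightarrow{(0,\,\Id_M)} M\nslash G$ via a constant map; by the prestack compatibilities, the restricted bundle with connection is then canonically the pullback of $(Q,\Theta)$ along a constant map, whose parallel transport is the canonical identification of fibers. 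Since transport in the $[0,1]$-direction is computed slicewise, it depends only on the endpoint forms, so the straight and broken paths yield the same isomorphism. With that substitution (the same slicewise mechanism the paper uses at the end of Lemma \ref{Lem:UniversalFormula}), your proof is complete and, on the functoriality point, more thorough than the paper's own.
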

\begin{proof}Let $M\nslash G \xrightarrow{\cQn} \Bn K$.  At the level of objects and morphisms, respectively, we use the maps
\[ M \xrightarrow{(0,\, \Id_M)} \O^1(\fg) \times M, \qquad G \times M \xrightarrow{(\Id_G,\, \theta_\fg, \, \Id_M)} G \times \O^1(\fg) \times M.\]
For $X\xrightarrow{f} M$, define $\cQ(f) = \cQ(0,f)$, and for $X \xrightarrow{(g,f)} G\times M$, define $\cQ(g,f)$ by $\cQn(g, g^*\theta_\fg, f)$, which gives an isomorphism $\cQn(g^*\theta_\fg,f) \to \cQn(0, gf).$  Combined with the natural isomorphism from Lemma \ref{Lem:Affine}, this gives the desired morphism
\[ \cQ(f) = \cQ(0, f) \iso \cQ(g^*\theta_\fg, f) \xrightarrow{\cQn(g,\, g^*\theta_g, \,f)} \cQ(0, gf) = \cQ(gf).\]
\end{proof}

Proposition \ref{Prop:T3} and Lemma \ref{Lem:ForgetConns} combine to define a functor \[ \Bunc[K](M\nslash G) \longrightarrow G\GBun[K](M).\]
Therefore, for $\cQn \in \Bunc[K](M\nslash G)$, we know that $Q$ has a $G$-equivariant structure, but we must show that $\Theta$ is $G$-invariant.

\begin{lemma}\label{Lem:GinvtConn}For $\cQn \in \Bunc[K](M \nslash G)$, the bundle $(Q,\Theta)\in G\GBunc[K](M)$.
\end{lemma}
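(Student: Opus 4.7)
The plan is to reduce $G$-invariance of $\Theta$ to its pointwise definition (Section \ref{Sec:Review}), namely $g^*\Theta = \Theta$ for every $g \in G$, and then exhibit, for each such $g$, a single morphism in $(M \nslash G)(M)$ whose image under $\cQn$ realizes the action of $g$ on $Q$ as an isomorphism in $\Bunc[K](M)$.

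For fixed $g \in G$, I take $c_g \colon M \to G$ to be the constant map at $g$ and consider the morphism $(c_g, 0, \Id_M)$ in $(M \nslash G)(M)$. Because $c_g$ and $c_g^{-1}$ are constant, their pullbacks of the Maurer--Cartan form vanish, giving
\[ \d_0(c_g, 0, \Id_M) = (0, \Id_M), \qquad \d_1(c_g, 0, \Id_M) = (0 \cdot c_g^{-1},\, c_g \cdot \Id_M) = (0, \phi_g), \]
where $\phi_g \colon M \to M$ denotes the diffeomorphism $m \mapsto gm$. Applying $\cQn$ and composing with the canonical identification $\cQn(0, \phi_g) \iso \phi_g^*\cQn(0, \Id_M) = \phi_g^*(Q, \Theta)$ supplied by Lemma \ref{Lem:Affine} yields an isomorphism $(Q,\Theta) \xrightarrow{\sim} \phi_g^*(Q, \Theta)$ in $\Bunc[K](M)$; in particular, the underlying bundle map $Q \to Q$ covers $\phi_g$ and preserves $\Theta$.

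The remaining task is to recognize this bundle map as the action of $g$ built from $\cQ$ in Proposition \ref{Prop:T3}. That action is extracted from $\Psi = \cQ(\Id_{G\times M}) = \cQn(\pi_G, \pi_G^*\theta_\fg, \pi_M)$ on $G\times M$ (using Lemma \ref{Lem:ForgetConns}), and pulling back along the slice inclusion $i_g \colon M \hookrightarrow G\times M$, $m \mapsto (g, m)$, gives $i_g^*\pi_G = c_g$, $i_g^*(\pi_G^*\theta_\fg) = 0$, and $i_g^*\pi_M = \Id_M$, so one recovers precisely $\cQn(c_g, 0, \Id_M)$. Varying $g$ then yields $g^*\Theta = \Theta$ for all $g$, which is the definition of $G$-invariance.

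The main difficulty, such as it is, is purely bookkeeping: handling the right-action formula $\alpha \cdot g = \Ad_{g^{-1}}\alpha + g^*\theta_\fg$ correctly so that the target of $(c_g, 0, \Id_M)$ really is $(0, \phi_g)$, and then matching $\cQn(c_g, 0, \Id_M)$ with the previously constructed $G$-action via the slice restriction. No further geometric input is needed beyond Lemmas \ref{Lem:Affine} and \ref{Lem:ForgetConns} together with the fact that morphisms in $\Bunc[K](M)$ preserve connections by definition.
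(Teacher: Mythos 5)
Your proof is correct and follows essentially the same route as the paper's: fix $g\in G$, observe that constancy of the map $M\to G$ forces the $\O^1(\fg)$-component of the target to remain $0$, apply $\cQn$ to the morphism $(c_g,0,\Id_M)$, and combine with Lemma \ref{Lem:Affine} to get a connection-preserving isomorphism $(Q,\Theta)\iso \phi_g^*(Q,\Theta)$. Your additional slice-inclusion check identifying $\cQn(c_g,0,\Id_M)$ with the restriction of $\Psi=\cQ(\Id_{G\times M})$ is a point the paper leaves implicit (it simply asserts $\ell_h$ is induced by $\cQ(h,0,\Id_M)$), so spelling it out is a welcome extra bit of care rather than a deviation.
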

\begin{proof}Fix $h\in G$, regarded also as a map $h\colon M\to G$.  Since $h$ is a constant map, $0\mycdot h = 0 \in \O^1(M;\fg)$.  Therefore, we have the morphism
\begin{equation}\label{Eq:Tn3Gaction} \begin{tikzcd}
M  & [60] M \nslash G,
 \ar[from=1-1, to=1-2, bend left=30, "" '{name=U}, "{(0,\,\Id_M)}" pos=.51]   \ar[from=1-1, to=1-2, bend right=30,  "" {name=D}, "{(0,\, h\cdot \Id_M )}" ' pos=.51] 
\ar[Rightarrow, from=U, to=D, "{(h, 0, \Id_M)}" ] 
\end{tikzcd} \end{equation}
with the map $\ell_h\colon Q \to Q$ induced by $\cQ(h, 0, \Id_M)$.  This combines with the natural isomorphisms from Lemma \ref{Lem:Affine} to give
\[  (Q,\Theta) \iso \cQn(0, \Id_M) \xrightarrow{\cQn(h, 0, \Id_M)} \cQn(0, h \cdot \Id_M) \iso h^*(Q,\Theta), \]
which is a morphism in $\Bn K(M)$.  Hence, $\ell_h^*\Theta=\Theta$ and $\Theta$ is $G$-invariant.
\end{proof}

\begin{lemma}\label{Lem:UniversalFormula}There is a natural isomorphism $\cQn(\alpha, f) \iso (1-\iota_{\rho(\alpha)})f^*(Q,\Theta)$.
\end{lemma}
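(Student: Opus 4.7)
By Lemma~\ref{Lem:Affine}, the underlying $K$-bundle of $\cQn(\alpha, f)$ is canonically identified with $f^*Q$ via parallel transport along the path $t\alpha$. The task therefore reduces to showing that the connection $\cQn(\alpha, f)$ assigns on $f^*Q$ matches $(1-\iota_{\rho(\alpha)})f^*\Theta$. The case $\alpha=0$ is immediate from Lemma~\ref{Lem:Affine}, so the content of the lemma lies in the dependence on $\alpha$.

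The heart of my approach is the Maurer--Cartan case $\alpha = g^*\theta_\fg$ for $g\colon X\to G$. The calculation inside the proof of Lemma~\ref{Lem:ForgetConns} shows that $(g, g^*\theta_\fg, f)$ is a morphism $(g^*\theta_\fg, f)\to(0, gf)$ in $(M\nslash G)(X)$. Applying $\cQn$ and combining with Lemma~\ref{Lem:Affine} yields an isomorphism of bundles with connection
\[ \cQn(g^*\theta_\fg, f) \xrightarrow{\cQn(g,\,g^*\theta_\fg,\,f)} \cQn(0, gf) \iso (gf)^*(Q,\Theta). \]
Tracing the construction of the $G$-action $\ell_h$ on $(Q,\Theta)$ in Lemma~\ref{Lem:GinvtConn}, the underlying $K$-bundle isomorphism is the pointwise $G$-action $\iota\colon f^*Q\to(gf)^*Q$, $q\mapsto g(x)q$. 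A direct computation in the spirit of Lemma~\ref{Lem:Contract} using the $G$-invariance of $\Theta$ and the definition \eqref{Eq:VF} of $\rho$ then gives $\iota^*(gf)^*\Theta = (1-\iota_{\rho(g^*\theta_\fg)})f^*\Theta$, which settles the lemma for Maurer--Cartan forms.

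For general $\alpha$, I would pass to the trivial $G$-bundle $P = X\times G$ with tautological connection $\Theta_\alpha$ (satisfying $(\Id\times e)^*\Theta_\alpha=\alpha$) and equivariant map $F_f(x,h)=h^{-1}f(x)$. With $g\colon P\to G$ the projection and $\pi\colon P\to X$, a direct check yields $\Theta_\alpha\cdot g^{-1}=\pi^*\alpha$ and $gF_f = f\o\pi$, so $(g,\Theta_\alpha,F_f)$ is a morphism $(\Theta_\alpha, F_f)\to(\pi^*\alpha, f\o\pi)$ in $(M\nslash G)(P)$, producing a natural iso $\cQn(\Theta_\alpha, F_f)\iso\pi^*\cQn(\alpha, f)$. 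On the other hand, $\Theta_\alpha = g^*\theta_\fg + \Ad_{g^{-1}}\pi^*\alpha$ decomposes as a Maurer--Cartan part plus an affine correction; the Maurer--Cartan case applied to $(g^*\theta_\fg, F_f)$ handles the first piece, and Lemma~\ref{Lem:Affine} handles the affine deformation. Pulling back along the unit section $s\colon X\hookrightarrow P$, $x\mapsto(x,e)$, where $s^*\pi^*\alpha=\alpha$ and $s^*g^*\theta_\fg=0$, extracts the desired formula $(1-\iota_{\rho(\alpha)})f^*\Theta$ on $X$, and naturality follows from the functoriality of $\cQn$.

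The main obstacle will be the careful bookkeeping through the chain of natural isomorphisms from Lemmas~\ref{Lem:Affine} and~\ref{Lem:GinvtConn}, particularly verifying that the affine correction $\Ad_{g^{-1}}\pi^*\alpha$ contributes exactly the $\iota_{\rho(\alpha)}$ term after the restriction along $s$. The key algebraic identity driving the final computation is the compatibility of the $G$-action formula in \eqref{Eq:VF} with $(g^{-1})^*\theta_\fg = -\Ad_g g^*\theta_\fg$, which turns the Maurer--Cartan term on $P$ into the $\iota_{\rho(\alpha)}$ term on $X$ after restriction.
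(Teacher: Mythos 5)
Your Maurer--Cartan step is correct, and it is essentially the paper's own argument run pointwise on $X$ instead of universally on $G\times M$: the paper applies $\cQn$ to the morphism $(g,\theta_\fg,d_0)\colon (\theta_\fg,d_0)\Rightarrow(0,d_1)$ over $G\times M$, identifies the underlying bundle map with $\Psi$, and invokes Lemma \ref{Lem:GinvtConn} together with Proposition \ref{Prop:EquivConns} to obtain $\cQn(\theta_\fg,d_0)\iso(1-\iota_{\rho(\theta_\fg)})d_0^*(Q,\Theta)$, then pulls back along $(g,f)\colon X\to G\times M$ using \eqref{Eq:ContractionPullback}. Your version (the morphism $(g,g^*\theta_\fg,f)$, the pointwise $G$-action as underlying map, $G$-invariance of $\Theta$) is precisely the pullback of that computation, so up to this point you and the paper agree.

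The genuine gap is the passage from Maurer--Cartan forms to general $\alpha$, and the precise failure is the clause ``Lemma \ref{Lem:Affine} handles the affine deformation.'' Lemma \ref{Lem:Affine} produces isomorphisms only in $\Bun[K](X)$: it identifies the underlying bundles $\cQ(\beta,\phi)\iso\cQ(\beta',\phi)$ but gives no control over how the \emph{connection} of $\cQn(\beta,\phi)$ depends on $\beta$ --- and that dependence is exactly the assertion of the lemma being proved, so invoking it here is circular. Your move to $P=X\times G$ is legitimate as far as it goes: $\Theta_\alpha\mycdot g^{-1}=\pi^*\alpha$ does follow from $(g^{-1})^*\theta_\fg=-\Ad_g\, g^*\theta_\fg$, giving $\cQn(\Theta_\alpha,F_f)\iso\pi^*\cQn(\alpha,f)$. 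But $\Theta_\alpha=g^*\theta_\fg+\Ad_{g^{-1}}\pi^*\alpha$ is of Maurer--Cartan type only when $\alpha$ already was, so the MC case does not compute $\cQn(\Theta_\alpha,F_f)$; you have reproduced the original problem on $P$ rather than solved it. Worse, restriction along the unit section $s$ kills exactly the summand you can handle and keeps the one you cannot: $s^*g^*\theta_\fg=0$ while $s^*\Ad_{g^{-1}}\pi^*\alpha=\alpha$, so on $X$ the entire content of the formula lives in the affine correction. The paper closes this gap by a different mechanism: a connection on a fixed bundle is determined by its parallel transport, hence by restriction along all paths $\gamma\colon I\to X$; on an interval every $\fg$-valued $1$-form is of Maurer--Cartan type, $\gamma^*\alpha=g^*\theta_\fg$ for some $g\colon I\to G$ (integrate the obvious ODE), whence $\gamma^*\cQn(\alpha,f)\iso\cQn(g^*\theta_\fg,f\o\gamma)\iso\gamma^*(1-\iota_{\rho(\alpha)})f^*(Q,\Theta)$ for every $\gamma$, and the two connections agree. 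Substituting this restriction-to-paths argument for your trivial-bundle decomposition repairs the proof; the rest of your outline can stand.
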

\begin{proof}
First consider 
\begin{equation*} \begin{tikzcd}
G\times M  & [60] M \nslash G,
 \ar[from=1-1, to=1-2, bend left=20, "" '{name=U}, "{(\theta_\fg,\, d_0)}"]   \ar[from=1-1, to=1-2, bend right=20,  "" {name=D}, "{(0,\, d_1)}" '] 
\ar[Rightarrow, from=U, to=D, "{(g, \theta_\fg, d_0)}" ] 
\end{tikzcd} \end{equation*}
where $G\times M\xrightarrow{g} G$ is the projection map, and $0 \mycdot g = \theta_\fg$.  This induces the following morphism of $K$-bundles with connection on $G\times M$, which defines $\Psi \colon d_0^*Q \to d_1^*Q$ after forgetting the connections.
\[ \begin{tikzcd}
\cQn(\theta_\fg, d_0) \ar[dr] \ar[rr, "{\cQn(g,\theta_\fg,d_0) \iso \Psi}"] && \cQn(0,d_1) \iso d_1^*(Q,\Theta) \ar[dl] \\ 
&G\times M 
\end{tikzcd}\]
Lemma \ref{Lem:GinvtConn} states that $\Theta$ is $G$-invariant, and Proposition \ref{Prop:EquivConns} states that $\Psi^*d_1^*\Theta = (1-\iota_{\rho(\theta_\fg)})d_0^*\Theta$ for $G$-invariant connections.  Hence, 
\begin{equation}\label{Eq:UnivFormula} \cQn(\theta_\fg, d_0) \iso (1-\iota_{\rho(\theta_\fg)}) d_0^*(Q,\Theta).\end{equation}

When $\alpha = g^*\theta_\fg \in \O^1(X;\fg)$ for some $g\colon X \to G$, then $(\alpha, f)$ factors as
\[ \begin{tikzcd}
&G \times M \ar[dr, "{(\theta_\fg, d_0)}"] \\
X \ar[ur, "{(g,f)}"] \ar[rr, "{(\alpha, f)}"] && \O^1(\fg) \times M.
\end{tikzcd} \]
Using formulas \eqref{Eq:ContractionPullback} and \eqref{Eq:UnivFormula},
\[ \cQn(\alpha, f) \iso (g,f)^* (1-\iota_{\rho(\theta_\fg)}) d_0^*(Q,\Theta) \iso (1-\iota_{\rho(g^*\theta_\fg)}) f^*(Q,\Theta).\]

In general, $\alpha \in \O^1(X;\fg)$ will not equal $g^*\theta_\fg$ for some map $g\colon X \to G$.  But, any bundle with connection on $X$ is completely determined by its restriction to all paths in $X$.  And, on any path $\gamma \colon I \to X$ it is true that $\gamma^*\alpha = g^*\theta_\fg$ for some $g\colon I \to G$.  Therefore,  
\[ \gamma^*\cQn(\alpha, f) \iso \cQn( g^*\theta_\fg, f\o \gamma) \iso (1- \iota_{\rho(g^*\theta_\fg)})(f \o \gamma)^*(Q,\Theta) = \gamma^*(1-\iota_{\rho(\alpha)}) f^*(Q,\Theta)\]
for all paths $\gamma$.  Hence, $\cQn(\alpha, f) \iso (1-\iota_{\rho(\alpha)}) f^*(Q,\Theta)$.
\end{proof}

\begin{prop}\label{Prop:Tn3Faithful}The construction $\cQn \mapsto \cQn(0,\Id_M)$ defines a faithful functor $\Tn_3$ such that $\Tn_3 \Tn_2 \Tn_1 \iso \Id_{G\GBunc[K](M)}$.  Also, the previously defined $T_3$ is faithful and satisfies $T_3 T_2 T_1 \iso \Id_{G\GBun[K](M)}$.
\end{prop}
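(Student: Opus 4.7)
The plan is to extract $\Tn_3$ from the preceding lemmas, establish faithfulness using Lemma \ref{Lem:UniversalFormula}, and verify $\Tn_3\Tn_2\Tn_1 \iso \Id$ by direct calculation on objects.

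Lemma \ref{Lem:GinvtConn} together with \eqref{Eq:Tn3Gaction} already equips $\Tn_3(\cQn) := \cQn(0, \Id_M)$ with the structure of a $G$-equivariant $K$-bundle with $G$-invariant connection on $M$. For a 2-morphism $\psi \colon \cQn \Rightarrow \cQn'$, set $\Tn_3(\psi) := \psi(0, \Id_M)$. Naturality of $\psi$ applied to the morphism $(h, 0, \Id_M) \in (M\nslash G)(M)$ of \eqref{Eq:Tn3Gaction} yields a commutative square exhibiting $\Tn_3(\psi)$ as a $G$-equivariant map, so $\Tn_3(\psi)$ is a morphism in $G\GBunc[K](M)$. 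Functoriality of $\Tn_3$ in $\psi$ is immediate.

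For faithfulness, suppose $\psi, \psi' \colon \cQn \Rightarrow \cQn'$ satisfy $\Tn_3\psi = \Tn_3\psi' =: \phi$. For an arbitrary $(\alpha, f) \in (M\nslash G)(X)$, Lemma \ref{Lem:UniversalFormula} supplies a natural isomorphism $\cQn(\alpha, f) \iso (1 - \iota_{\rho(\alpha)}) f^*(Q, \Theta)$, and similarly with $\cQn$ replaced by $\cQn'$. Because that isomorphism is produced entirely by applying the prestack morphism $\cQn$ (respectively $\cQn'$) to morphisms in $M\nslash G$ together with parallel transport along a path $(t\alpha, f)$, it is natural with respect to $\psi$ and $\psi'$. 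Under the identification, both $\psi(\alpha, f)$ and $\psi'(\alpha, f)$ correspond to the same pullback $f^*\phi$, which is determined by $\phi$. Hence $\psi(\alpha, f) = \psi'(\alpha, f)$ for every test object, so $\psi = \psi'$.

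To establish $\Tn_3\Tn_2\Tn_1 \iso \Id$, take $(Q, \Theta) \in G\GBunc[K](M)$. The object $(0, \Id_M) \in (M\nslash G)(M)$ corresponds under $M\nslash G \to \En G \times_G M$ to $(M \times G, 0, F_{\Id_M})$ with $F_{\Id_M}(m, g) = g^{-1} m$. Using $\iota_{\rho(0)} = 0$,
\[ \Tn_3\Tn_2\Tn_1(Q, \Theta) = \bigl( (F_{\Id_M}^*Q)/G, \, F_{\Id_M}^*\Theta \bigr). \]
The identity section $m \mapsto (m, e)$ of $M \times G \to M$ induces a canonical bundle isomorphism $(F_{\Id_M}^*Q)/G \iso Q$ over $M$, under which $F_{\Id_M}^*\Theta$ descends to $\Theta$. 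A short direct calculation, tracing $\cQn(h, 0, \Id_M)$ through the corresponding gauge transformation $\varphi_h(m, g) = (m, hg)$, shows the induced $G$-action on $(F_{\Id_M}^*Q)/G$ matches the original $G$-action on $Q$. Naturality of this identification in $(Q, \Theta)$ gives the required natural isomorphism of functors. The analogous statements for $T_3$ are obtained by deleting every occurrence of $\nabla$ from the arguments above; no new ideas are required. The main obstacle is the bookkeeping in the composition step---carrying the canonical trivialization of $M \times G$ across the quotient and matching the $G$-actions through the left/right conventions of Section \ref{Sec:Review}---but this is a routine unfolding of definitions.
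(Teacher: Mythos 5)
Your definition of $\Tn_3$ on objects and morphisms, your equivariance check via \eqref{Eq:Tn3Gaction}, and your faithfulness argument via Lemma \ref{Lem:UniversalFormula} all match the paper's proof. However, the composition step contains a genuine error. The object $(0,\Id_M) \in (M\nslash G)(M)$ does \emph{not} correspond to the trivial $G$-bundle with zero connection form: under $M\nslash G \to \En G \times_G M$, the pair $(\alpha,f)$ is sent to $(X\times G, \Theta_\alpha, F_f)$ where $\Theta_\alpha$ is the connection whose pullback along the identity section is $\alpha$, and for $\alpha = 0$ this connection is $\Theta_0 = \theta_\fg$, the pullback of the Maurer--Cartan form --- nonzero, as any connection form must restrict to $\theta_\fg$ on fibers. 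So in the formula for $\Tn_1$ the counter-term is $\iota_{\rho(\theta_\fg)}$, not $\iota_{\rho(0)} = 0$, and the correct intermediate object is $\big( (F_{\Id_M}^*Q)/G, \ (1-\iota_{\rho(\theta_\fg)})F_{\Id_M}^*\Theta \big)$. Your claimed object $\big( (F_{\Id_M}^*Q)/G, \ F_{\Id_M}^*\Theta \big)$ is not even well-defined: $F_{\Id_M}^*\Theta$ is $G$-invariant but satisfies $\iota_{\rho(A)} F_{\Id_M}^*\Theta = \Theta(\rho(A))$, which vanishes only when $\Theta$ is $G$-basic, so this form does not descend to the quotient in general. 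The counter-term is precisely what renders the form $G$-basic; this is the subtlety the paper emphasizes before Proposition \ref{Prop:EquivConns}, and your step ``using $\iota_{\rho(0)} = 0$'' conflates the 1-form $0 \in \O^1(M;\fg)$ with the connection it determines.

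The conclusion is rescued, but only by an argument you skipped: pulling back along the canonical section $\Id_M \times \, e$, one has $(\Id_M \times \, e)^*\theta_\fg = 0$ (the $G$-component of the section is constant), so by \eqref{Eq:ContractionPullback} the section carries the basic form $(1-\iota_{\rho(\theta_\fg)})F_{\Id_M}^*\Theta$ back to $\Theta$. This, together with your (correct, and in the paper carried out explicitly) verification that the induced $G$-action on $(F_{\Id_M}^*Q)/G$ matches the original action on $Q$, yields $\Tn_3\Tn_2\Tn_1(Q,\Theta) \iso (Q,\Theta)$. The repair is a one-liner, but it is exactly the point where the $\nabla$-decorated statement differs from the connection-free one, so it cannot be elided.
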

\begin{proof}
The functor $\Tn_3$ is well-defined at the level of objects, since Lemma \ref{Lem:GinvtConn} shows that $(Q,\Theta) \= \Tn_3(\cQn) \= \cQn(0,\Id_M)$ is an element of $G\GBunc[K](M)$.   At the level of morphisms, any $\varphi \colon \cQ \to \cQ'$ induces an connection-preserving bundle isomorphism $\phi \colon (Q,\Theta) \to (Q',\Theta')$ in $\Bunc[K](M)$ by $\phi = \Tn_3(\varphi)\= \varphi(0,\Id_M)$.  To see that $\phi$ is $G$-equivariant, apply $\varphi$ to the isomorphism giving multiplication in $Q, Q'$ by fixed $h\in G$, and which was described in Lemma \ref{Lem:GinvtConn} .  We obtain
\[ \begin{tikzcd}[column sep=tiny]
(Q,\Theta) \ar[r, phantom, "="] \ar[d, "\phi" '] & \cQn(0,\Id_M) \ar[d, "{\varphi(0,\Id_M)}" '] \ar[r, "{\cQn(h, 0, \Id_M)}"] & [40] \cQn(0,h\cdot \Id_M)  \ar[d, "{\varphi(0,h\cdot \Id_M)}"] \ar[r, phantom, "\iso"] & h^*(Q,\Theta)\ar[d, "h^*\phi" ]\\
(Q',\Theta') \ar[r, phantom, "="] & \cQn'(0,\Id_M) \ar[r, "{\cQn'(h, 0, \Id_{M})}" '] & \cQn'(0,h\cdot \Id_M) \ar[r, phantom, "\iso"] & h^* (Q',\Theta'),
\end{tikzcd} \]
and hence $\phi$ is $G$-equivariant.  Therefore, $\Tn_3$ is a well-defined functor.

 To see that $\Tn_3$ is faithful, suppose we have two morphisms $\cQn \xrightarrow{\varphi_i} \cQn'$, $i= 1, 2$.  By Lemma  \ref{Lem:UniversalFormula}, for every $(\alpha, f) \in (M\nslash G)(X)$ we have natural isomorphisms
\[ \begin{tikzcd}[row sep=small]
\cQn(\alpha, f) \ar[r, "\varphi_i"] \ar[d, phantom, "\iso" rotate=-90] &[40] \cQn'(\alpha, f)   \ar[d, phantom, "\iso" rotate=-90] \\
(1-\iota_{\rho(\alpha)})f^*(Q,\Theta) \ar[r, "f^* (\Tn_3\varphi_i)"] & (1-\iota_{\rho(\alpha)})f^*(Q',\Theta')
\end{tikzcd} \]
Hence, if $\Tn_3(\varphi_1) = \Tn_3(\varphi_2)$, then $\varphi_1=\varphi_2$.

To check the composition $\Tn_3 \Tn_2 \Tn_1$, we begin with $(Q,\Theta) \in G\GBun[K](M)$.  Then, 
\[ \Tn_3 \big(\Tn_2 \Tn_1(Q,\Theta)\big) = \Big(\Tn_2 \Tn_1 (Q,\Theta)\Big)(0,\Id_M) = \Big(\Tn_1(Q,\Theta)\Big) (M \times G, \theta_\fg, F_{\Id_M}), \]
where $F_{\Id_M}(x,g) = g^{-1} x$.  By the definition of $\Tn_1$ (from \cite[\textsection 5]{Redden16a} and described at the beginning of this section), this is the bundle $\big((F_{\Id_M}^*Q)/G, (1-\iota_{\rho(\theta_\fg)}\big)F_{\Id_M}^*Q) \to M$.  The canonical section $\Id_M\times \, e \colon M \to M\times G$ determines a natural isomorphism 
\begin{align*}
Q &\overset{\iso}\longrightarrow (F_{\Id_M}^*Q)/G \\ 
q &\longmapsto [\pi(q), e, q],\end{align*}
where $\pi \colon Q \to M$ and we regard $F^*Q \subset M\times G \times Q$.  The induced $G$-structure on $\Tn_3 \Tn_2 \Tn_1(Q,\Theta)$, defined by the map $\Psi_h\colon (F_{\Id_M}^*Q)/G \to (F_{h\cdot \Id_M}^*Q)/G$, is induced by the gauge transformation  $(m, g) \mapsto (m, hg)$ on the trivial bundle.  Under our natural isomorphism, the induced map $\ell_h\colon Q \to Q$ is given by the composition
\[ \begin{tikzcd}[row sep=-1, column sep=small]
Q \ar[r, "\iso"]& (F_{\Id_M}^*Q)/G \ar[r, "\Psi_h"] & (F_{h\cdot \Id_M}^*Q)/G \ar[r, "\iso"] & h^*Q \ar[r] & Q \\
q  \ar[r,mapsto] &  {} [\pi(q), e, q]  \ar[r,mapsto] & {}[ \pi(q), h, q] = [\pi(q), 1, h \mycdot  q]  \ar[rr,mapsto]&  & h \mycdot q. 
\end{tikzcd} \]
This is the original $G$-structure on $Q$.  Finally, $(\Id_M \times \, e)^*\theta_\fg = 0$, since the $g$ part of the section is constant.  Therefore,
\[ (\Id_M \times \, e)^*(1-\iota_{\rho(\theta_\fg)})F_{\Id_M}^*\Theta = (1-\iota_{\rho(0)}) (\Id_M \times \, e)^* F_{\Id_M}^* \Theta  \iso \Theta.\]  
Since the isomorphism $(F_{\Id_M}^*Q)/G \iso Q$ is compatible with the $G$-action and connection,  it gives the desired $\Tn_3 \Tn_2 \Tn_1 (Q,\Theta) \iso (Q,\Theta) \in G\GBunc[K](M)$.

The same argument, with connections omitted, shows that $T_3$ is faithful and $T_3 T_2 T_1 \iso \Id_{G\GBun[K](M)}$.
\end{proof}

\begin{proof}[Proof of Theorem \ref{Thm:EquivBun}]From Lemma \ref{Lem:T2Faithful}, the functor $\Tn_2$ is faithful.  From Proposition \ref{Prop:Tn3Faithful}, $\Tn_3$ is faithful, and there is a natural isomorphism $\Tn_3 \Tn_2 \Tn_1 \iso \Id_{G\GBunc[K](M)}$.  Therefore, by Lemma \ref{Lem:Cat}, each $\Tn_i$ is an equivalence of categories.  The same argument also implies that each $T_i$ is an equivalence of categories.

Finally, we know from Lemma \ref{Lem:UniversalFormula} that for $\cQn \in \Bunc[K](M\nslash G)$, 
\[ \cQn(\alpha, f) \iso (1-\iota_{\rho(\alpha)})f^*(Q,\Theta).\]
Hence, $\cQn(\alpha, f) \iso \cQn(\alpha', f)$ for all $(\alpha, f)$ if and only if $\iota_{\rho(A)} \Theta = 0$ for all $A \in \fg$.  Therefore, $\cQn \in \Bunc[K](M\sslash G)$ is well-defined if and only if $\Theta$ is $G$-basic.  The corresponding full subcategory of $\Bunc[K](M\nslash G)$, given by the image of $\Bunc[K](M\sslash G)$, is naturally equivalent to full subcategory of $G$-basic connections $G_{\bas} \GBunc[K](M) \subset G\GBunc[K](M)$.
\end{proof}

\bibliographystyle{alphanum}
\bibliography{MyBibDesk}

\end{document}